\newtheorem{theorem}{Theorem}[section]
\newtheorem{lemma}[theorem]{Lemma}
\newtheorem{corollary}[theorem]{Corollary}
\newtheorem{proposition}[theorem]{Proposition}
\theoremstyle{remark}
\newtheorem{remark}[theorem]{Remark}
\theoremstyle{definition}
\newcommand{\dR}{\ensuremath{\mathbb{R}}} 
\newcommand{\R}{\dR}
\begin{document}

\title{On Sobolev regularity of mass transport and transportation inequalities}

\maketitle

\centerline{\large \footnote{ Moscow State University of Printing Arts, St. Tikhon Orthodox University, and Higher School of Economics (Moscow)}Alexander V. Kolesnikov}

\begin{abstract}
We study Sobolev a priori estimates for the optimal transportation $T = \nabla \Phi$ between  probability measures $\mu=e^{-V} \ dx$ and $\nu=e^{-W} \ dx$ on $\R^d$.
Assuming uniform convexity of the potential $W$ we show that $\int \| D^2 \Phi\|^2_{HS} \ d\mu$, where $\|\cdot\|_{HS}$ is the Hilbert-Schmidt norm,
is controlled by the Fisher information of $\mu$. In addition, we prove similar estimate for the $L^p(\mu)$-norms of $\|D^2 \Phi\|$ and obtain some $L^p$-generalizations of the well-known Caffarelli 
contraction theorem.
We establish a connection of our results with the Talagrand transportation inequality.
We  also prove a corresponding dimension-free version for the relative Fisher information with respect to a Gaussian measure.
\end{abstract}

Keywords: Monge-Kantorovich problem, Monge-Amp{\`e}re equation, Sobolev a priori estimates, Gaussian measures, log-concave measures, transportation inequalities, log-Sobolev inequality, Lipschitz mappings

\section{Introduction}
Let $\mu = e^{-V} dx$ and $\nu= e^{-W} dx$ be  probability measures on $\R^d$
and let $T=\nabla \Phi$ be the optimal transportation mapping  such that $\nu$ is the image of $\mu$ with respect to $T$: $\nu = \mu \circ T^{-1}$. In what follows
we say for brevity that $T$   sends (pushes forward)   $\mu$ onto $\nu$. The corresponding convex potential is denoted by $\Phi$.
The reader is advised to consult \cite{Vill} for an account in the optimal transportation theory.

Assuming that $W$ is uniformly convex ($D^2 W \ge K \cdot \mbox{Id}, \ K > 0$ )  we prove that
\begin{equation}
\label{main}
\mathcal{I}_{\mu} := \int |\nabla V|^2 \ d\mu \ge K \int \|D^2 \Phi\|^2_{HS} \ d\mu.
\end{equation}
More generally, we show that for every unit $e \in \R^d$ and $p \ge 1$
\begin{equation}
\label{lp-phi}
\frac{p+1}{2} \| V^2_e \|_{L^p(\mu)} \ge K  \| \Phi^2_{ee} \|_{L^p(\mu)}.
\end{equation}
These results can be considered as (global, dimension-free)  Sobolev  a priori  estimates for the following Monge-Amp{\`e}re
equation
$$
e^{-V} = e^{-W(\nabla \Phi)} \det D^2 \Phi.
$$
The regularity theory for the Monge-Amp{\`e}re operator has a quite long history. Many famous scientists contributed to this  area.
We advise the reader to consult  \cite{Guit}  (see also \cite{Bak}, \cite{Pog}, \cite{GT},  \cite{CafCab}, \cite{Kryl2}, \cite{Vill}).
In particular, some Sobolev a priori estimates for the optimal transportation have been obtained by L. Caffarelli in  \cite{Caf-90}. The most recent results in this direction are concerned
with the H{\"o}lder regularity of optimal transportation  maps on manifolds
(see \cite{TW}, \cite{Loep}, \cite{DL}, \cite{KimMC}, \cite{FKMC} and the references therein).

The approach we use here is in a sense  probabilistic. The estimates obtained in this paper are
 1) dimension-free, 2) global, 3) can be obtained in a  constructive way by integration-by-parts and above-tangential formalism.
We  refer to  the works of N.~Ivochkina (for instance, \cite{Ivoch}) for some similar arguments.
In spite of the large amount of results, the only global dimension-free estimate known before was given by the Caffarelli
contraction theorem \cite{Caf}.  According to this result  every optimal transportation $T$ sending the standard Gaussian measure onto  a log-concave measure $\nu$ with uniformly convex $W$ (i.e. $D^2 W \ge K \cdot \mbox{Id}$  with $K>0$)
 is a $\frac{1}{\sqrt{K}}$ - contraction (i.e. $\|T\|_{Lip} \le \frac{1}{\sqrt{K}}$).

This contraction theorem has become very popular among  probabilists because it gives immediately very nice analytical consequences (for instance, the Bakry-Ledoux theorem, a probabilistic version of the L{\'e}vy-Gromov
comparison theorem). Some recent generalizations  can be found in \cite{Kol}, \cite{KimMilman}, \cite{Vald}.
Another applicatons are: log-Sobolev and isoperimetric inequalities. 
By a recent observation of E. Milman (see \cite{Milman1}, \cite{Milman2}), even weaker $L^p$-estimates for $\| D^2 \Phi\|$
imply results of this type if the image measure is log-concave.

We note (though it is not aim of this paper) that in this way one can also establish some 
Sobolev estimates for the third-order derivatives.
  Our estimates rely on the following (formal) identity:
$$
\int    V^2_{x_i} \  d \mu
=
 \int \langle D^2 \Phi \cdot D^2 W(\nabla \Phi) \cdot D^2 \Phi \cdot e_i, e_i \rangle \ d \mu
+
\int \bigl\| (D^2 \Phi)^{-\frac{1}{2}} D^2 \Phi_{x_i}  (D^2 \Phi)^{-\frac{1}{2}} \bigr\|^2_{\rm HS} \ d\mu .
$$
In particular, if $\Phi$ is sufficiently smooth and  $D^2 W \ge K \cdot \mbox{Id}$, $K>0$, then this identity implies (\ref{main}) and the following estimate for the third-order derivatives
of $\Phi$:
$$
\int    |\nabla V|^2 \  d \mu  \ge 2 \sqrt{K}  \int  \Bigl[\sum_{i=1}^d \|D^2 \Phi_{x_i} \|^2_{HS} \Bigr]^{\frac{1}{2}} \ \ d\mu .
$$

Another motivation for this study  comes from the probability theory.
It's worth noting that (\ref{main}) appears to be very similar to the well-known Talagrand inequality (see \cite{Tal}),
which is  a classical representative of the so-called transportation inequalities (see surveys \cite{Led}, \cite{GL}),
 close relatives of  various functional inequalities
(concentration, Sobolev, isoperimetric, etc.).
Let $\gamma$ be the standard Gaussian measure.
 Consider the optimal transportation $\nabla \Phi$ of
$g \cdot \gamma$ onto $\gamma$. Then the following (Talagrand or transportation inequality) holds
\begin{equation}
\label{Tal}
\mbox{Ent}_{\gamma} g \ge \frac{1}{2} W^2_2(\gamma, g \cdot \gamma ) , \  \  \mbox{where}
\end{equation}
$$
\mbox{Ent}_{\gamma} g = \int g \log g \ d\gamma, \   \   \   W_2(\gamma, g \cdot \gamma ) = \Bigl(\int |x - \nabla \Phi(x)|^2   g
\ d \gamma\Bigr)^{1/2}
$$
are the relative entropy and the Kantorovich distance.

We recall that the Talagrand inequality follows from the so-called displacement convexity property of the entropy functional (see \cite{AGS}, \cite{Vill}).
Note in this respect that the energies (Fisher information etc.), unlike entropies,  are NOT displacement convex.
Nevertheless, in Section 3 we reveal a direct relation of (\ref{main}) to (\ref{Tal}).
First we prove the inequality
\begin{equation}
\label{inc-ineq}
\int (V(x+ e) - V(x))  \ d\mu
\ge
\frac{K}{2} \int  |\nabla \Phi(x+e)- \nabla \Phi(x)|^2 \ d\mu,
\end{equation}
where $e \in \R^d$.  It turns out that (\ref{inc-ineq}) can be considered as a version of a generalized Talagrand-type inequality proved in
\cite{Kol04}.
Then we show that (\ref{main}) follows from (\ref{inc-ineq}) under a natural limiting procedure.

In Section 5 we prove some dimension-free estimates of the type (\ref{main}).
For instance, if $\mu = g \cdot \gamma$  (with smooth $g$) and $\nu = \gamma$, then
\begin{align*}
 \mbox{\rm I}_{\gamma} g  & =   2 \mbox{\rm Ent}_{\gamma} g
- 2 \int \log {\det}_2 (D^2 \Phi - \mbox{\rm Id})\ g d \gamma
\\&
+
 \int \|D^2 \Phi -\mbox{\rm Id} \|^2_{HS} \  g d \gamma
 +
\sum_{k=1}^{d} \int \mbox{\rm Tr} \bigl[ (D^2 \Phi)^{-1} D^2 \Phi_{x_k} \bigr]^2 \ g d \gamma,
\end{align*}
where
$ \mbox{\rm I}_{\gamma} g =
\int \frac{|\nabla g|^2}{g} d \gamma$ (relative information),
${\det}_2 (D^2 \Phi - \mbox{\rm Id}) = \det D^2 \Phi \cdot \exp \bigl(d - \Delta \Phi \bigr)$
(the Fredholm-Carleman determinant of  $D^2 \Phi - \mbox{\rm Id}$).

We note that all the terms in the right-hand side are non-negative.
In particular, this identity implies the following stronger version of the log-Sobolev inequality
$$
\mbox{\rm I}_{\gamma} g \ge 2 \mbox{\rm Ent}_{\gamma} g - \int \log {\det}_2 (D^2  \Phi)^2 \  g \ d \gamma 
$$
and the following (essentially infinite-dimensional) analog of (\ref{main})
$$
 \mbox{\rm I}_{\gamma} g  \ge
 \int \|D^2 \Phi -\mbox{\rm Id} \|^2_{HS} \  g d \gamma.
$$
Note that the result stated in this form looks particularly relevant to  the Talagrand inequality.
See also  Remark \ref{extrem} below on uniqueness of the extremals for the classical  log-Sobolev inequality.
In addition, we  prove some dimension-free results for the general log-concave reference measures.

In Section 6 we prove several $L^p$-generalizations of the main result.
We prove that for every fixed unit vector $e$ and $p \ge 1$ one has
$$
K \| \Phi^2_{ee} \|_{L^{p}(\mu)} \le \| (V_{ee})_{+} \|_{L^{p}(\mu)},
$$
$$
K \| \Phi^2_{ee} \|_{L^{p}(\mu)} \le \frac{p+1}{2}  \| V^2_{e} \|_{L^{p}(\mu)}.
$$
We emphasize that all these estimates can be obtained without any use of regularity theory. Instead of it we apply
the change of variables formula from \cite{McCann2} and the above-tangential formalism.
Note that the contraction theorem follows from these estimates and this is exactly the case when $p=\infty$.
In addition, in Section 7 we prove  the following dimension-free estimate for the operator norm $\|D^2 \Phi\|$ 
$$
K 
\Bigl( \int  \| D^2 \Phi\|^{2p} \ d \mu \Bigr)^{\frac{1}{p}}
\le \Bigl(  \int  \| (D^2 V)_{+} \|^p  \ d \mu \Bigr)^{\frac{1}{p}}.
$$

Finally, we note that some of our results  hold not only for the optimal transportation  mappings.
For instance, they  can be established for the so-called triangular mappings (see \cite{B2006}, \cite{ZhOv}).
 See Section 2 and the  forthcoming paper  \cite{KR}.

The author thanks Luigi Ambrosio, Max-Konstantin von Renesse, Michel Ledoux, Emanuel Milman, and Frank Morgan for their interest and stimulating discussions.
This work was partially done during the author's visit to the Technische Universit{\"a}t Berlin under the support
of the German Academic Exchange Service (DAAD).

\section{Heuristic proof}

In this section we give a {\it formal} computation of the main formula of our work.
See  Sections 3 and 4 for rigorous justifications.

In what follows we denote by
$\mathcal{I}_{\mu}$ the Fisher information of $\mu$:
$$
\mathcal{I}_{\mu} =
 \int |\nabla V|^2 \ d\mu
$$
and by $\| A \|_{HS} = \sqrt{\mbox{Tr} (A \cdot A^{T}})$ the Hilbert-Schmidt norm of a matrix $A$.
For the operator norm we use the standard notation $\|\cdot\|$.
It will be assumed throughout that $\mathcal{I}_{\mu} < \infty$ and that $\mu$ and $\nu$ admit the finite second moments.
The last condition is automatically satisfied for $\nu$ if $D^2 W \ge K \cdot \mbox{Id}, \  K >0$.

Let $T$ be a  mapping sending $\mu$ onto $\nu$. We assume that the potentials $V, W$ are smooth, $T : \R^d \to \R^d$ is a smooth diffeomorfism
satisfying $\det DT > 0$. By the change of variables formula
$$
e^{-V} = e^{-W(T)} \det DT.
$$
Taking the logarithm we obtain
\begin{equation}
\label{log-ch-v}
V = W(T) - \log \det DT.
\end{equation}
Choose a unit vector $e$ and differentiate (\ref{log-ch-v})  along $e$ twice. To this end we apply the following fundamental relation
$$
\partial_e \log \det DT = \mbox{Tr} \big[ DT_e \cdot (DT)^{-1} \bigr].
$$
Differentiating once again and applying
$$
DT_e \cdot (DT)^{-1} + DT \cdot \bigl[ (DT)^{-1}\bigr]_e =0
$$
we get
$$
\partial_{ee} \log \det DT = \mbox{Tr} \big[ DT_{ee} \cdot (DT)^{-1} \bigr]
- \mbox{Tr} \bigl[ DT_e \cdot (DT)^{-1} \bigr]^2.
$$
Coming back to (\ref{log-ch-v}) one gets
$$
V_e = \langle \nabla W(T), T_e \rangle
-\mbox{Tr} \big[ DT_e \cdot (DT)^{-1} \bigr],
$$
\begin{equation}
\label{vee}
V_{ee}  = \langle D^2 W(T) \cdot T_e, T_e \rangle
+  \langle \nabla W(T), T_{ee} \rangle
-\mbox{Tr} \big[ DT_{ee} \cdot (DT)^{-1} \bigr]
+ \mbox{Tr} \bigl[ DT_e \cdot (DT)^{-1} \bigr]^2.
\end{equation}
Let us integrate (\ref{vee}) over $\mu$. Clearly, $\int V_{ee} \ d \mu = \int V^2_{e} \ d \mu $.
Let us show that after taking the integral  the terms in the middle cancel each other. Indeed, let us denote $S= T^{-1}$. One has
\begin{align}
\label{MAIBP}
\int \langle \nabla W(T), T_{ee}\rangle \ d \mu  & =
\int \langle \nabla W, T_{ee}(S) \rangle \ d \nu =
\int \mbox{Tr} D \bigl[ T_{ee} (S) \bigr] \ d\nu \\& \nonumber
=  \int \mbox{Tr} \bigl[ DT_{ee} (S) \cdot DS  \bigr] \ d\nu
=  \int \mbox{Tr} \bigl[ DT_{ee} (S) \cdot (DT)^{-1}(S)  \bigr] \ d\nu
\\ &
=  \int \mbox{Tr} \bigl[ DT_{ee} \cdot (DT)^{-1}  \bigr] \ d\mu.
\end{align}
Thus we get
$$
\int V^2_e \ d\mu
=
\int \langle D^2 W(T) \cdot DT \cdot e, DT \cdot e \rangle  \ d\mu
+
\int  \mbox{Tr} \bigl[ DT_e \cdot (DT)^{-1} \bigr]^2 \ d\mu.
$$

We are interested in two particular cases

1) Optimal transportation mappings.

Optimal transportation mappings have the form $T = \nabla \Phi$, where $\Phi$ is the convex function.
In this case one has
\begin{equation}
\label{Fisher-i}
\int    V^2_{x_i} \  d \mu
=
 \int \langle D^2 \Phi \cdot D^2 W(\nabla \Phi) \cdot D^2 \Phi \cdot e_i, e_i \rangle \ d \mu
+
\int \mbox{\rm Tr} \Bigl[ (D^2 \Phi)^{-1} D^2 \Phi_{x_i} \Bigr]^2 \   d\mu.
\end{equation}
Note that the last integrand is non-negative and admits another representation
   $$
 \mbox{\rm Tr} \Bigl[ (D^2 \Phi)^{-1} D^2 \Phi_{x_i} \Bigr]^2
=
\bigl\| (D^2 \Phi)^{-\frac{1}{2}} D^2 \Phi_{x_i}  (D^2 \Phi)^{-\frac{1}{2}} \bigr\|^2_{\rm HS}  .
$$
Taking the sum over $i$ we get
\begin{equation}
\label{Fisher}
\mathcal{I}_{\mu}
=
\int \mbox{\rm Tr} \Bigl[D^2 \Phi \cdot D^2 W(\nabla \Phi) \cdot D^2 \Phi  \Bigr]\ d\mu
+ \sum_{i=1}^{d}
\int \bigl\| (D^2 \Phi)^{-\frac{1}{2}} D^2 \Phi_{x_i}  (D^2 \Phi)^{-\frac{1}{2}} \bigr\|^2_{\rm HS}  \   d\mu.
\end{equation}

2) Triangular mappings.

Mappings of this type have the form
$$
T =  (T_1(x_1), T_2(x_1,x_2), \cdots, T_d(x_1, \cdots, x_d) ),
$$
where every $T_i$ is increasing in $x_i$.

It is easy to check
that in this case
\begin{equation}
\label{Tr1}
\int  V^2_{x_i} \ d \mu
=
\int \langle D^2 W(T) \cdot \partial_{x_i} T, \partial_{x_i} T \rangle \ d\mu +
\sum_{k=i}^{d} \int \Bigl( \frac{\partial_{x_i x_k} T_k}{\partial_{x_k} T_k}
 \Bigr)^2 \ d \mu,
 \end{equation}
\begin{equation}
\label{Tr2}
\mathcal{I}_{\mu}
=
\int \mbox{\rm Tr} \Bigl[ DT \cdot D^2W(T)\cdot (DT)^* \Bigr] \ d \mu
+
\sum_{k=1}^{d} \int |\nabla \ln \partial_{x_k} T_k |^2 \ d \mu.
\end{equation}

\section{Main result}

Recall that a function  $W$ is called uniformly convex (uniformly $K$-convex)  if
\begin{equation}
\label{-K-conv}
x \mapsto W(x) - \frac{K}{2} x^2
\end{equation}
is a convex function for $K \ge 0$.
For a smooth $W$ this is equivalent to the condition $D^2 W \ge K \cdot \mbox{Id}$.
Everywhere in this paper we deal with the case $K>0$ only.

One can introduce in the standard way the weighted Sobolev spaces $W^{2,p}(\mu)$.
We say that $f  \in L^2(\mu)$ admits a distributional derivative $f_{x_i} \in L^1(\mu)$ if
$$
\int f_{x_i} \xi \ d \mu = - \int f \xi_{x_i} \ d \mu + \int f V_{x_i} \xi \ d \mu
$$
for every test function $\xi$.
Similarly one can define $W^{2,p}_0(\mu)$ as a completion of the test functions in the corresponding Sobolev norm.
It is known that $W^{2,p}(\mu) = W^{2,p}_0(\mu)$ if $\mathcal{I}_{\mu} < \infty$ (see Theorem 5.1 in \cite{Eb}).

We denote by $f^{+}$ the function $\max\{f,0\}$ and by $A^+$
the positive part of a symmetric matrix $A$ (or zero matrix if $A \le 0$).

\begin{theorem}
\label{mr}
Assume that $\mathcal{I}_{\mu} < \infty$, $\mu$  admits the finite second moment,
and
$W$ satisfies (\ref{-K-conv}) for some $K>0$.
Then $\Phi \in W^{2,2}(\mu)$ and
\begin{equation}
\label{transport}
\mathcal{I}_{\mu}   \ge K
\int \| D^2 \Phi\|^2_{\rm HS} \ d\mu.
\end{equation}
\end{theorem}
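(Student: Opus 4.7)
The plan is to deduce the theorem from the integrated, finite-difference inequality (\ref{inc-ineq}) by a scaling argument, since the heuristic identity (\ref{Fisher}) of Section 2 already yields the desired bound whenever $\Phi$ is classically $C^2$ with positive-definite Hessian: the first trace term is then at least $K\int\|D^2\Phi\|_{HS}^2\, d\mu$ by $D^2 W\ge K\cdot\mathrm{Id}$, and the second is manifestly nonnegative. The whole issue is that such smoothness of $\Phi$ is unavailable under the sole hypotheses $\mathcal{I}_\mu<\infty$ and finite second moment, so one must bypass the differentiation carried out in Section 2.

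The first step is to establish (\ref{inc-ineq}),
$$\int(V(x+e)-V(x))\, d\mu \ge \frac{K}{2}\int|\nabla\Phi(x+e)-\nabla\Phi(x)|^2 \, d\mu,$$
which is meaningful without any regularity of $\Phi$ beyond McCann's change-of-variables formula. I would derive it by combining the pointwise $K$-convexity estimate $W(y_1)+W(y_2)-2W(\tfrac{y_1+y_2}{2})\ge\tfrac{K}{4}|y_1-y_2|^2$, applied to $y_j=\nabla\Phi(x_j)$ with $x_1=x$ and $x_2=x+e$, with the Monge-Amp\`ere identity $V=W(\nabla\Phi)-\log\det D^2\Phi$ in the McCann sense of \cite{McCann2}; averaging against $\mu$ and discarding the log-determinant contribution by concavity of $\log\det$ along a displacement interpolation, in the spirit of \cite{Kol04}, then gives (\ref{inc-ineq}).

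With (\ref{inc-ineq}) in hand, I would fix a unit vector $e\in\R^d$, replace $e$ by $te$ with $t\downarrow 0$, and divide by $t^2$. On the left, a Taylor expansion combined with the identities $\int \partial_e V\, d\mu=0$ and $\int V_{ee}\, d\mu=\int V_e^2\, d\mu$ (both obtained by integration by parts after a standard mollification of $V$, legitimate because $\mathcal{I}_\mu<\infty$ and test functions are dense in $W^{2,2}(\mu)$ by \cite{Eb}) gives
$$\lim_{t\downarrow 0}\frac{1}{t^2}\int(V(x+te)-V(x))\, d\mu=\frac{1}{2}\int V_e^2\, d\mu.$$
On the right, convexity of $\Phi$ together with Alexandrov's theorem guarantees that $\nabla\Phi$ is differentiable Lebesgue-a.e., hence $\mu$-a.e.~since $\mu\ll dx$; the difference quotient $(\nabla\Phi(x+te)-\nabla\Phi(x))/t$ then converges $\mu$-a.e.~to the Alexandrov Hessian applied to $e$, and Fatou's lemma gives
$$\liminf_{t\downarrow 0}\frac{1}{t^2}\int|\nabla\Phi(x+te)-\nabla\Phi(x)|^2\, d\mu\ge\int|D^2\Phi\cdot e|^2\, d\mu.$$
Summing these directional inequalities over an orthonormal basis $e_1,\dots,e_d$ yields $\mathcal{I}_\mu\ge K\int\|D^2\Phi\|_{HS}^2\, d\mu$, and the membership $\Phi\in W^{2,2}(\mu)$ follows because the Alexandrov Hessian is then $L^2(\mu)$-integrable and coincides with the absolutely continuous part of the distributional Hessian, the singular part being ruled out by a further test-function argument against (\ref{inc-ineq}).

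The main obstacle is the asymmetry of the limit: on the right only a liminf is available, since no uniform Lipschitz bound for $\nabla\Phi$ is at our disposal without invoking Caffarelli's theorem, so the formal equality in (\ref{Fisher}) is unavoidably weakened to a directional inequality; simultaneously, the left side must be controlled via a mollification of $V$ that both preserves $\mathcal{I}_\mu$ in the limit and permits the interchange of $t\to 0$ with the $\mu$-integral. The finite second moment hypothesis enters here to control boundary contributions at infinity in the integration by parts.
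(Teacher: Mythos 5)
Your proposal takes a route genuinely different from the paper's own proof of Theorem~\ref{mr}. The paper proves the theorem directly by a three-step approximation: first, for smooth bounded-below $V$ with $D^2V \le c\cdot\mathrm{Id}$, Caffarelli's regularity theory gives a smooth $\Phi$ with $\sup\|D^2\Phi\| < \infty$, and then a careful test-function/integration-by-parts argument (with $\xi = \eta(\nabla\Phi)$) establishes the pointwise identity~(\ref{Fisher-i}); second, $\mu$ is approximated by smooth measures and the estimate is passed to the limit via weak $L^2$-convergence of $D^2\Phi_n\sqrt{\rho_n}$; third, $\nu$ is approximated similarly. Your route---establish the finite-difference Talagrand-type inequality~(\ref{inc-ineq}) first and then take $t\to 0$---is exactly the paper's \emph{alternative} route, which appears as Theorem~\ref{increment} plus the Proposition of Section~4. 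Note, however, that the paper does not offer that Proposition as an independent proof: it explicitly says ``Following the arguments of Theorem~\ref{mr} we see that it is sufficient to establish the implication \ldots for a nice potential $V$,'' i.e., the reduction to a nice $V$ still relies on the approximation machinery of the direct proof. So while your plan is close to the paper's Section~4, it is not a shortcut around the approximation work, which you acknowledge only vaguely at the end.

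Two concrete gaps in your version of the Section~4 route. First, your derivation of~(\ref{inc-ineq}) does not work as sketched: the midpoint $K$-convexity of $W$ applied to $\nabla\Phi(x)$ and $\nabla\Phi(x+e)$ produces the quantity $2W\bigl(\tfrac{\nabla\Phi(x)+\nabla\Phi(x+e)}{2}\bigr) - \log\det D^2\Phi(x) - \log\det D^2\Phi(x+e)$, which does not telescope to $2V(x)$, and ``discarding the log-determinant contribution by concavity of $\log\det$'' is not a valid step. The paper's Theorem~\ref{increment} instead uses the tangent form of $K$-convexity, changes variables by the conjugate potential $\Psi$, appeals to Lemma~\ref{compos} to handle the distributional divergence, and then uses the elementary inequality $\mathrm{Tr}\,C - d - \log\det C \ge 0$ for $C = B^{1/2}AB^{1/2}$ with $A=D^2_a\Phi(x+e)$, $B=(D^2_a\Phi(x))^{-1}$; this is a genuine argument you cannot bypass. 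Second, your passage via Alexandrov a.e.\ differentiability plus Fatou gives only $\mathcal{I}_\mu \ge K\int\|D^2_a\Phi\|_{HS}^2\,d\mu$, where $D^2_a\Phi$ is the absolutely continuous part of the distributional Hessian; to conclude $\Phi\in W^{2,2}(\mu)$ one must exclude a singular part. The paper's Proposition does this by extracting a weakly $L^2(\mu)$-convergent subsequence of the difference quotients and identifying the weak limit with the distributional derivative $\nabla\Phi_e$. Your sketch only gestures at ``a further test-function argument''; that missing step is what actually proves Sobolev membership (one can patch it: the difference quotients are $L^2(\mu)$-bounded by~(\ref{inc-ineq}) and converge $\mu$-a.e., hence converge weakly in $L^2(\mu)$ to the Alexandrov Hessian, which therefore coincides with the distributional one---but that argument must be made explicitly).
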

\begin{proof}
{\bf Step 1} ($V$ and $W$ are smooth). Assume, in addition, that $V$ and $W$ satisfy the following assumptions
\begin{itemize}
\item[1)]
$V, W \in C^{\infty}(\R^d$)  and bounded from below
\item[2)]
$ D^2 V \le c \cdot \mbox{\rm Id}$ for some $c \in \R$.
\end{itemize}
By the Caffarelli's regularity results (see, for instance, Theorem 4.14 of \cite{Vill} and some justification in
\cite{Kol}, Section 4) $\Phi$ is smooth.
Moreover, it follows by the Caffarelli-type arguments from 2) and the uniform convexity
 of $W$ that  $$\sup_{x \in \R^d} \| D^2 \Phi(x)\| < C$$  for some $C$ (see, for instance, Theorem 2.2 in \cite{Kol}
and an independent proof in Section 6 below).

Let us show that (\ref{Fisher-i}) holds.
We take a smooth compactly supported test function $\xi$.
Multiply (\ref{vee}) by $\xi$ and integrate over $\mu$. Apply integration-by-parts formula (see (\ref{MAIBP})).
One obtains
\begin{align}
\label{xi-main}
\int   & V^2_{x_i} \  \xi  \  d \mu
\\&
\nonumber
=
\int \langle D^2 \Phi \cdot D^2 W(\nabla \Phi) \cdot D^2 \Phi \cdot e_i, e_i \rangle  \xi \ d \mu
+
\int \bigl\| (D^2 \Phi)^{-\frac{1}{2}} D^2 \Phi_{x_i}  (D^2 \Phi)^{-\frac{1}{2}} \bigr\|^2_{\rm HS}  \xi \   d\mu
\\&
\nonumber
+ \int \partial_{e_i} \xi \cdot V_{x_i} \ d \mu + \int \langle \nabla \xi,  (D^2 \Phi)^{-1} D^2 \Phi_{x_i}  \cdot e_i \rangle \ d\mu.
\end{align}
Assume that $\xi$ has the form $\xi = \eta(\nabla \Phi)$, where $\eta$ is a test function. One has $\nabla \xi = D^2 \Phi \cdot \nabla \eta(\Phi)$.
 Using the uniform estimate of $\| D^2 \Phi\|$ one obtains
$$
\bigl| \int \partial_{e_i} \xi \cdot V_{x_i} \ d \mu \bigr|
\le C \int |\nabla \eta(\nabla \Phi)| |V_{x_i}|\ d\mu \le
C \mathcal{I}^{\frac{1}{2}}_{\mu} \bigl( \int |\nabla \eta|^2 \ d\nu\bigr)^{\frac{1}{2}}.
$$
To estimate the last term we integrate by parts
\begin{align*}
 & \int \langle \nabla \xi,  (D^2 \Phi)^{-1} D^2 \Phi_{x_i}  \cdot e_i \rangle \ d\mu
=
\int \langle \nabla \eta(\nabla \Phi),   D^2 \Phi_{x_i}  \cdot e_i \rangle \ d\mu
\\&
= -
\int \langle  D^2 \eta(\nabla \Phi)  D^2 \Phi \cdot e_i,   D^2 \Phi   \cdot e_i \rangle \ d\mu
+
\int \langle \nabla \eta(\nabla \Phi),   D^2 \Phi  \cdot e_i \rangle V_{x_i} \ d\mu.
\end{align*}
The latter does not exceed
\begin{align*}&
C^2 \int \|  D^2 \eta(\nabla \Phi) \| \ d\mu
+ C
\int | \nabla \eta(\nabla \Phi)|  |V_{x_i}| \ d\mu
\\& \le
C^2 \int \| D^2 \eta \|  d \nu
+
C \mathcal{I}^{\frac{1}{2}}_{\mu} \bigl( \int |\nabla \eta|^2 \ d\nu\bigr)^{\frac{1}{2}}.
\end{align*}
Choosing a sequence of test function $\{\eta_n\}$ such that $0 \le \eta_n \le 1$, $\eta_n \to 1$ uniformly on every compact set, and $|\nabla \eta_n|^2 \to 0,\| D^2 \eta_n\|  \to 0$ in $L^1(\nu)$,
we get (\ref{Fisher-i}) (hence (\ref{transport})) for $V, W$ satisfying 1)-2).

{\bf Step 2} ($W$ is smooth).  Fix a  smooth uniform $K$-convex function $W$  and approximate $\mu$ by smooth measures.  We choose a sequence of functions $\{V_n\}$ such that every $V_n$ satisfies 1)-2) . In addition, we assume that
$\sqrt{\rho_n} \to \sqrt{\rho}$ in $W^{1,2}(\R^d)$, every $\mu_n = \rho_n \ dx = e^{-V_n} \ dx$ is a  probability measure,
and $\sup_n \int |x|^2 d \mu_n < \infty$.

Note that there exists a subsequence of $\{\nabla \Phi_n\}$ (denoted again by $\{\nabla \Phi_n\}$)
such that $\nabla \Phi_n \to \nabla \Phi$ almost everywhere.
Indeed, let $\Psi_n$ be the convex conjugated function to $\Phi_n$. Remind that $\nabla \Phi_n$ and $\nabla \Psi_n$
are reciprocal. One has $\sup_n \int |\nabla \Psi_n|^2 \  d \nu = \sup_n \int |x|^2 \ d \mu_n < \infty.$
We also require without loss of generality that $\int \Psi_n \ d \nu = 0$ (note that $\Psi_n \in L^2(\nu)$ by the Poncar{\'e} inequality
for uniform log-concave measures:  $K \int \bigl(f - \int f \ d \nu \bigr)^2 \ d\nu \le \int |\nabla f |^2 \ d \nu$).
Since $W$ is smooth, $\sup_n \int_{B_r} |\nabla \Psi_n|^2 \ d x < \infty$ for every ball $B_r$.
Using compactness of  Sobolev embeddings one can easily show that there exists
an a.e. convergent subsequence (denoted again by $\{\Psi_n\}$ ) $\Psi_n \to \Psi$. Since $\Psi_n$ are convex, one also has $\nabla \Psi_n \to \nabla \Psi$
a.e. This implies a.e. convergence of the convex conjugated potentials $\Phi_n \to \Phi$ and their gradients
 $\nabla \Phi_n \to \nabla \Phi$ .

Moreover, since
$$
\int \|\nabla \Phi_n\|^2 \ d\mu_n = \int \|x\|^2 \ d\nu = \int \|\nabla \Phi\|^2 \ d\mu,
$$
one has $\nabla \Phi_n \cdot \sqrt{\rho_n} \to \nabla \Phi \cdot \sqrt{\rho}$ strongly in $L^2(\R^d)$.
In the same way one can check that  (again up to a subsequence) $\partial_{x_i x_j} \Phi_n \sqrt{\rho_n}$ converges
weakly in $L^2(\R^d)$ to some function $F$. This implies
$$
\int \xi  \cdot \partial_{x_i x_j} \Phi_n \sqrt{\rho_n} \ \sqrt{\rho_n} \ dx
\to
\int \xi \cdot  F \ \sqrt{\rho} \ dx.
$$
In the other hand
$$
\int \xi \cdot \partial_{x_i x_j}  \Phi_n \ \rho_n \  dx
=
-
\int \xi_{x_j} \cdot \partial_{x_i} \Phi_n \ \rho_n \ dx
-
\int \xi \cdot \partial_{x_i} \Phi_n \ \frac{\partial_{x_j} \rho_n}{\sqrt{\rho_n}} \cdot \sqrt{\rho_n} \  dx.
$$
By the strong convergence $\nabla \Phi_n \sqrt{\rho_n} \to \nabla \Phi \sqrt{\rho} $ the latter tends to
$$
-
\int \xi_{x_j} \cdot \partial_{x_i} \Phi \ \rho \ dx
-
\int \xi \cdot \partial_{x_i} \Phi \ \partial_{x_j}  \rho\  dx.
$$
The relation
$$
\int \xi \cdot  F \ \sqrt{\rho} \ dx
=
-
\int \xi_{x_j} \cdot \partial_{x_i} \Phi \ \rho \ dx
-
\int \xi \cdot \partial_{x_i} \Phi \ \partial_{x_j} \rho  \  dx
$$
 implies that the second distributional derivative $\partial_{x_i x_j}  \Phi$ equals to $F/\sqrt{\rho}$. Hence
$ D^2 \Phi_n \cdot \sqrt{\rho_n} \to D^2 \Phi \cdot \sqrt{\rho}$ weakly in $L^2(\R^d)$.
Since the statement holds for the approximating sequence (according to Step 1), by the
standard property of the weak convergence
$$
\mathcal{I}_{\mu}= \lim_n \mathcal{I}_{\mu_n} \ge \underline{\lim}_n \int \|D^2 \Phi_n\|^2 \ d\mu_n \ge \int \|D^2 \Phi\|^2 \ d\mu.
$$

{\bf Step 3}. At the final step we fix $\mu$ and approximate $e^{-W}$ by smooth uniformly log-concave probability densities $e^{-W_n}$ such that
$\int |x|^2 \ d\nu_n \to \int |x|^2 \ d\nu$  and (\ref{-K-conv}) holds for every $W_n$.
The proof follows the arguments of Step 2. It is  even easier because one has to deal with the fixed reference measure $\mu$.
One obtains that $\nabla \Phi_n \to \nabla \Phi$ strongly in $L^2(\mu)$ and $D^2 \Phi_n \to D^2 \Phi$ weakly in $L^2(\mu)$.
The result follows from the standard properties of the weak convergence.
\end{proof}

\begin{remark} {\bf Third-order derivatives.}
Note that  some global bounds on the {\it third} derivatives of $\Phi$
are also available. Indeed, if $\Phi$ is sufficiently smooth and (\ref{Fisher-i}) holds, then
\begin{align*}
\int V^2_{x_i} \ d\mu  & \ge K \int \|D^2 \Phi \cdot e_i \|^2 \ d\mu
+
\int \bigl\| (D^2 \Phi)^{-\frac{1}{2}} D^2 \Phi_{x_i}  (D^2 \Phi)^{-\frac{1}{2}} \bigr\|^2_{\rm HS} \ d\mu
\\&
\ge
 K \int \|D^2 \Phi \cdot e_i\|^2 \ d\mu
+ \int \frac{\bigl\| D^2 \Phi_{x_i} \bigr\|^2_{\rm HS}}{\|D^2 \Phi\|^2}   \ d\mu,
\end{align*}
where $\| \cdot \|$ is the standard operator norm. 
Summing over $i$, bounding the operator norm by the Hillbert-Schmidt norm, and applying
the Cauchy inequality, one obtains
$$ \int |\nabla V|^2 \ d\mu
\ge 2 \sqrt{K} \int  \Bigl[ \sum_{i=1}^d  \| D^2 \Phi_{x_i}\|^2_{\rm HS}  \Bigr]^{\frac{1}{2}} \ d \mu.
$$
\end{remark}

\section{Transportation inequalities}

In this section we show that inequality (\ref{main}) follows from a (generalized) Talagrand inequality.

The following generalization of the Talagrand inequality has been proved in \cite{Kol04}.
Let $f \cdot \nu$, $g \cdot \nu$ be  probability measures, $\nu = e^{-W} \ dx$ with
$D^2W \ge K \cdot {\mbox Id}$, $K >0$. Let $T_f$ ($T_g$ ) be the optimal transportation mapping
pushing forward $f \cdot \nu$ ($g \cdot \nu$) onto $\nu$. Then the following inequality holds
\begin{equation}
\label{fg}
\int f \log \frac{f}{g} \ d \nu \ge \frac{K}{2} \int |T_f - T_g|^2 f \ d\nu.
\end{equation}

\begin{remark}
The Talagrand inequality in its classical form
\begin{equation}
\label{fg0}
\int \rho \log \rho \ d\nu \ge \frac{K}{2} \int |T(x) - x|^2 \rho \ d\nu
\end{equation}
holds for any reasonable transportation mapping $T$ sending $\rho \cdot \nu$ onto $\nu$
and satisfying
\begin{equation}
\label{log-det}
\mbox{div} (T^{-1}) -d - \log \det D(T^{-1}) \ge 0
\end{equation}
(this can be checked by the standard transportational arguments, see, for instance, \cite{Led}).
 Then (\ref{fg}) follows from
(\ref{fg0}) if we set
$$
\rho = \frac{f}{g} \circ (T_g^{-1}), \ \ T = T_f \circ T^{-1}_g.
$$
Note  that  (\ref{log-det}) holds for $T$ because $D(T^{-1})$ is a composition of two non-negative matrix (see arguments below in
the proof of  Theorem \ref{increment}).
\end{remark}

Let us apply (\ref{fg}) to $f(x) = e^{-V(x) +W(x)}$ and $g(x) = e^{-V(x+e) + W(x)}$ ($e$ is a fixed vector).
Clearly, $T_f = \nabla \Phi$ is the optimal transportation between $\mu$ and $\nu$ and $T_g = \nabla \Phi(x+e)$.
We obtain
$$
\int (V(x+e) - V(x)) \ d\mu \ge \frac{K}{2} \int |\nabla \Phi(x+e) - \nabla \Phi(x)|^2 d\mu.
$$

In order to make the paper self-contained, we give below an independent prove of this result. Then we deduce from it the main result of the paper (inequality (\ref{main})).

Recall that every convex function $\varphi$ admits a.e. the so-called Alexandrov second-order  derivative $D^2_a \varphi$,
which is the absolutely continuous part of its distributional derivative $D^2 \varphi$.

The following lemma holds trivially for smooth mappings and can be easily checked  by approximation arguments.

\begin{lemma}
\label{compos}
Let $\varphi: A \to \R$, $\psi: B \to \R$ be convex functions on convex sets $A$, $B$. Assume that $\nabla \psi(B) \subset A$.
Then
$$
\mbox{\rm div} (\nabla \varphi \circ \nabla \psi)
\ge \mbox{\rm Tr}\bigl[ D^2_a \varphi (\nabla \psi) \cdot D^2_a \psi \bigr] \ dx \ge 0,
$$
where $\mbox{\rm div}$ is the distributional derivative.
\end{lemma}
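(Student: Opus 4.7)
The plan is to prove the lemma by approximation, reducing to the smooth case where both statements are classical consequences of the chain rule. I would split the claim into the pointwise non-negativity statement and the distributional divergence bound; the former is essentially algebraic, while the latter requires the bulk of the work.

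For the non-negativity $\mathrm{Tr}[D^2_a\varphi(\nabla\psi)\cdot D^2_a\psi]\ge 0$, I would simply note that by Alexandrov's theorem, applied to the convex functions $\varphi$ and $\psi$, the matrices $D^2_a\varphi(\nabla\psi(x))$ and $D^2_a\psi(x)$ exist and are symmetric positive semidefinite at almost every $x$. Then the cyclic identity $\mathrm{Tr}[AB] = \mathrm{Tr}\bigl[B^{1/2} A\, B^{1/2}\bigr]$ with $A = D^2_a\varphi(\nabla\psi)$ and $B = D^2_a\psi$ writes the trace as the trace of a PSD matrix.

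For the divergence bound, I would mollify: set $\varphi_\varepsilon = \varphi * \rho_\varepsilon$, $\psi_\varepsilon = \psi * \rho_\varepsilon$ with a standard smooth mollifier, which preserves convexity. For smooth convex $\varphi_\varepsilon,\psi_\varepsilon$ the chain rule gives the pointwise identity
\begin{equation*}
\mathrm{div}\bigl(\nabla\varphi_\varepsilon\circ\nabla\psi_\varepsilon\bigr)(x)
= \mathrm{Tr}\bigl[D^2\varphi_\varepsilon(\nabla\psi_\varepsilon(x))\cdot D^2\psi_\varepsilon(x)\bigr].
\end{equation*}
Testing against any non-negative $\eta\in C^\infty_c$ and integrating by parts on the left,
\begin{equation*}
-\int \bigl\langle \nabla\varphi_\varepsilon(\nabla\psi_\varepsilon),\,\nabla\eta\bigr\rangle\,dx
= \int \eta\,\mathrm{Tr}\bigl[D^2\varphi_\varepsilon(\nabla\psi_\varepsilon)\cdot D^2\psi_\varepsilon\bigr]\,dx.
\end{equation*}
On the left, $\nabla\varphi_\varepsilon\to\nabla\varphi$ and $\nabla\psi_\varepsilon\to\nabla\psi$ almost everywhere (by the standard theory of mollifications of convex functions, using continuity of $\nabla\varphi,\nabla\psi$ at their Lebesgue points), with locally uniform bounds on compact subsets of $A$ and $B$, so the left-hand side converges to $-\int\langle\nabla\varphi\circ\nabla\psi,\nabla\eta\rangle\,dx$, i.e. to $\int\eta\,d[\mathrm{div}(\nabla\varphi\circ\nabla\psi)]$. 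On the right, the integrand is non-negative, so Fatou's lemma combined with the pointwise a.e. convergence $D^2\varphi_\varepsilon(\nabla\psi_\varepsilon(x))\cdot D^2\psi_\varepsilon(x)\to D^2_a\varphi(\nabla\psi(x))\cdot D^2_a\psi(x)$ yields the desired lower bound by $\int \eta\,\mathrm{Tr}[D^2_a\varphi(\nabla\psi)\cdot D^2_a\psi]\,dx$.

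The main obstacle is precisely this last pointwise convergence of the \emph{composed} Hessians. Two separate issues arise: first, $D^2\psi_\varepsilon(x)\to D^2_a\psi(x)$ holds at a.e. $x$ because $D^2\psi_\varepsilon$ is the mollification of the matrix-valued measure $D^2\psi$, whose absolutely continuous part is $D^2_a\psi\,dx$ by Alexandrov's theorem; convergence holds at Lebesgue points of the AC part and at points where the singular part has zero density, which is a.e. Second, for $D^2\varphi_\varepsilon(\nabla\psi_\varepsilon(x))\to D^2_a\varphi(\nabla\psi(x))$ one needs the bad set $N\subset A$ on which the first convergence fails for $\varphi$ to have negligible preimage under $\nabla\psi$. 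This can be avoided by a harmless non-degeneracy/genericity argument: perturb $\psi$ to $\psi+\tfrac{\delta}{2}|x|^2$ (which leaves the inequality intact after $\delta\downarrow 0$ by the non-negativity of all integrands) so that $D^2\psi\ge\delta\cdot\mathrm{Id}$, forcing $\nabla\psi$ to push forward Lebesgue measure to an absolutely continuous measure and thus $\mathcal L^d(\{\nabla\psi\in N\})=0$. The remaining details are routine.
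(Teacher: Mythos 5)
Since the paper offers no proof of this lemma beyond the remark that it ``holds trivially for smooth mappings and can be easily checked by approximation arguments,'' your proposal is supplying precisely the argument the paper leaves implicit, and the overall strategy --- mollify, apply the smooth chain rule, test against a nonnegative $\eta$, pass to the limit by Fatou --- is the natural one and matches the paper's intent. The algebraic nonnegativity of $\mathrm{Tr}\bigl[D^2_a\varphi(\nabla\psi)\,D^2_a\psi\bigr]$ via $\mathrm{Tr}[AB]=\mathrm{Tr}[B^{1/2}AB^{1/2}]$ is correct, the a.e.\ convergence $D^2\psi_\varepsilon(x)\to D^2_a\psi(x)$ (differentiation of the matrix-valued Hessian measure at Lebesgue points of its absolutely continuous part, plus zero density of the singular part) is correct, and you have rightly isolated the one genuinely delicate point, namely a.e.\ convergence of the \emph{composed} Hessians $D^2\varphi_\varepsilon(\nabla\psi_\varepsilon(x))\to D^2_a\varphi(\nabla\psi(x))$.

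Your fix for that delicate point, however, introduces a problem it does not resolve. Replacing $\psi$ by $\psi_\delta=\psi+\tfrac{\delta}{2}|x|^2$ replaces $\nabla\psi$ by $\nabla\psi+\delta x$, and the hypothesis $\nabla\psi(B)\subset A$ then fails in general: $\nabla\psi_\delta(B)$ need not lie in $A$, so $D^2_a\varphi\circ\nabla\psi_\delta$ is not even defined. This is invisible in the paper's application (where effectively $A=B=\mathbb{R}^d$), but it is a genuine gap in the lemma as stated for abstract convex $A,B$. A related, smaller loose end: the mollifications $\varphi*\rho_\varepsilon$ and $\psi*\rho_\varepsilon$ live on shrunken domains, so the inclusion $\nabla\psi_\varepsilon(\mathrm{supp}\,\eta)\subset A_\varepsilon$ must be arranged, and this is not automatic when $\nabla\psi(\mathrm{supp}\,\eta)$ approaches $\partial A$. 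Both issues are repaired at once by first restricting to test functions $\eta$ supported on compacta $K$ with $\nabla\psi(K)$ compactly contained in the interior of $A$ and then exhausting, or by extending $\varphi$ convexly to all of $\mathbb{R}^d$ and checking the extension does not disturb $D^2_a\varphi$ on $\nabla\psi(\mathrm{supp}\,\eta)$ in the limit $\delta\downarrow 0$; as written, neither step appears, so the argument has a hole precisely where the perturbation and the mollification meet the boundary of $A$. The localisation-to-a-compactum step should come first, before both the mollification and the $\delta$-perturbation.
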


\begin{theorem}
\label{increment}
Assume that $W$ is $K$-uniformly convex. Then for every $e \in \R^d$
$$
\int (V(x+ e) - V(x))  \ d\mu
\ge
\frac{K}{2} \int  |\nabla \Phi(x+e)- \nabla \Phi(x)|^2 \ d\mu.
$$
\end{theorem}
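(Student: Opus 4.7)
The strategy is to reduce Theorem \ref{increment} to the classical Talagrand inequality (\ref{fg0}) via the auxiliary transport map indicated in the preceding remark. Introduce $f(x) = e^{-V(x)+W(x)}$ and $g(x) = e^{-V(x+e)+W(x)}$, so that $f\cdot\nu = \mu$ and $g\cdot\nu$ is the translate of $\mu$ by $-e$. The optimal transports of these probability measures onto $\nu$ are $T_f=\nabla\Phi$ by definition and $T_g(x)=\nabla\Phi(x+e)$: the latter is the gradient of the convex function $x\mapsto\Phi(x+e)$ and pushes $e^{-V(x+e)}\,dx$ onto $\nu$ by a one-line change of variables, so Brenier's theorem forces optimality. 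Setting
$$
\rho = (f/g)\circ T_g^{-1}, \qquad T = T_f\circ T_g^{-1},
$$
one checks directly that $T$ pushes $\rho\cdot\nu$ onto $\nu$ by tracing through the two pushforwards.

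\textbf{Main obstacle: verifying (\ref{log-det}) for $T$.} Write $S := T^{-1} = \nabla\Phi_e\circ\nabla\Phi^*$, where $\Phi_e(x) := \Phi(x+e)$ and $\Phi^*$ denotes the Legendre conjugate. By Lemma \ref{compos} the distributional divergence of $S$ satisfies
$$
\mathrm{div}(S) \ge \mathrm{Tr}\bigl[D^2_a\Phi_e(\nabla\Phi^*)\cdot D^2_a\Phi^*\bigr].
$$
The matrix $A := D^2_a\Phi_e(\nabla\Phi^*)\cdot D^2_a\Phi^*$ is a product of two non-negative symmetric matrices, hence similar to $(D^2_a\Phi_e)^{1/2}\,D^2_a\Phi^*\,(D^2_a\Phi_e)^{1/2}\ge 0$, so its eigenvalues $\lambda_i$ are non-negative real. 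The scalar inequality $\lambda-1\ge\log\lambda$ summed over $\lambda_i$ then yields $\mathrm{Tr}(A)-d\ge\log\det A$, and substituting into the divergence bound produces (\ref{log-det}).

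\textbf{Classical Talagrand and conclusion.} With (\ref{log-det}) in hand one invokes the standard proof of (\ref{fg0}): expand $\log\rho$ from the Monge--Amp\`ere equation for $T$, bound $W(x)-W(T(x))$ below using the $K$-convexity of $W$, and integrate by parts (via $\nabla e^{-W}=-\nabla W\,e^{-W}$) after the substitution $y=T(x)$; the cross term $\int\langle\nabla W(T),x-T\rangle\rho\,d\nu$ together with the Jacobian term $\int\log\det DT\cdot\rho\,d\nu$ collapses to $\int[\mathrm{div}(S)-d-\log\det DS]\,d\nu\ge 0$ by (\ref{log-det}), leaving $\int\rho\log\rho\,d\nu\ge\tfrac{K}{2}\int|T-\mathrm{id}|^2\rho\,d\nu$. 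Feeding back through the substitution $y=T_g(x)$, the left-hand side transforms into $\int f\log(f/g)\,d\nu = \int(V(x+e)-V(x))\,d\mu$ (after cancellation of the $W$-terms in the logarithm) and the right-hand side into $\tfrac{K}{2}\int|\nabla\Phi(x+e)-\nabla\Phi(x)|^2\,d\mu$, giving the claim.
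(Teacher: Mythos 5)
Your argument is correct, and it realizes precisely the reduction sketched in the Remark preceding the theorem: set $\rho=(f/g)\circ T_g^{-1}$, $T=T_f\circ T_g^{-1}$, verify the hypothesis (\ref{log-det}) for $T$, invoke the classical Talagrand inequality (\ref{fg0}), and change variables back. The identifications $T_g(x)=\nabla\Phi(x+e)$ (optimal by Brenier, as it is the gradient of the convex function $x\mapsto\Phi(x+e)$), $\int\rho\log\rho\,d\nu=\int(V(x+e)-V(x))\,d\mu$, and $\int|T-\mathrm{id}|^2\rho\,d\nu=\int|\nabla\Phi(x+e)-\nabla\Phi(x)|^2\,d\mu$ are all right, and your verification of (\ref{log-det}) via Lemma \ref{compos} together with $\mathrm{Tr}\,A-d\ge\log\det A$ for a product $A$ of two nonnegative symmetric matrices is the correct mechanism.

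The paper instead deliberately bypasses (\ref{fg0}) and gives a self-contained direct proof: it expands $V(x+e)-V(x)$ via McCann's change of variables formula for $\Phi$ itself, bounds the $W$-difference from below by $K$-convexity, changes variables to $\nu$ via $\nabla\Psi=\nabla\Phi^*$ in the cross term, applies Lemma \ref{compos} there, and then combines the resulting trace term with the log-determinant term using the same pointwise inequality $\mathrm{Tr}\,C-d-\log\det C\ge0$. So the two proofs are the same argument with the ingredients arranged differently: you compose the two transports into a single map $T$ and hang everything on the abstract statement (\ref{fg0}) plus its sufficient condition (\ref{log-det}); the paper keeps $\nabla\Phi(\cdot)$ and $\nabla\Phi(\cdot+e)$ separate and carries out the Monge--Amp\`ere bookkeeping directly, which avoids having to state and justify (\ref{fg0}) as a black box. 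One point your write-up glosses over (and which the direct proof also needs, with reference to McCann/Villani) is that the absolutely continuous part of $DS$ really does equal the product $D^2_a\Phi_e(\nabla\Phi^*)\cdot D^2_a\Phi^*$ a.e.\ and in particular that $(D^2_a\Phi\circ\nabla\Psi)^{-1}=D^2_a\Psi$ $\nu$-a.e.; this is needed to identify $\log\det DS$ with $\log\det A$ in your verification of (\ref{log-det}). Aside from that small omission the proposal is sound.
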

\begin{proof}
By a result of R.J.~McCann on the change of variables formula (see  \cite{McCann2} or \cite{Vill})
$$
e^{-V} = {\det}_a D^2 \Phi \cdot e^{-W(\nabla \Phi)}
$$
$\mu$-a.e.
Hence
$V = W(\nabla \Phi) - \log {\det}_a D^2 \Phi$ and
\begin{align*}
V(x+ e) - V(x)  & =
 W(\nabla \Phi(x+e)) - W(\nabla \Phi(x))
\\& - \log \Bigl[ ({\det}_a D^2 \Phi(x))^{-1}  \cdot {\det}_a D^2\Phi(x+e)\Bigr].
\end{align*}
By the $K$-uniform convexity of $W$
\begin{align*}
 W(\nabla \Phi(x+e)) - W(\nabla \Phi(x))  &
\ge
\langle \nabla \Phi(x+e)- \nabla \Phi(x), \nabla W(\nabla \Phi(x))  \rangle
\\& + \frac{K}{2} |\nabla \Phi(x+e)- \nabla \Phi(x)|^2.
\end{align*}
This implies
\begin{align*}
\int (V(x+ e) - V(x))  \ d\mu &
\ge
\int \frac{K}{2} |\nabla \Phi(x+e)- \nabla \Phi(x)|^2 \ d\mu
\\&
+ \int  \langle \nabla \Phi(x+e)- \nabla \Phi(x), \nabla W(\nabla \Phi(x))  \rangle  \ d\mu
\\& - \int \log \Bigl[ ({\det}_a D^2 \Phi(x))^{-1}  \cdot {\det}_a D^2\Phi(x+e)\Bigr] \ d \mu.
\end{align*}
Denote by $\Psi=\Phi^*$ the convex conjugated function of $\Phi$.
Using the fact that $\nabla \Psi$ and $\nabla \Phi$ are reciprocal we get
\begin{align*}
 & \int  \langle \nabla \Phi(x+e)- \nabla \Phi(x), \nabla W(\nabla \Phi(x))  \rangle  \ d\mu
= \int  \langle \nabla \Phi(\nabla \Psi(x)+e)- x, \nabla W(x)  \rangle  \ d\nu
\\&
=
\int \mbox{\rm{div}} \bigl(  \nabla \Phi(\nabla \Psi(x)+e)- x\bigr)  e^{-W},
\end{align*}
where $ \mbox{\rm{div}} \bigl(  \nabla \Phi(\nabla \Psi(x)+e)- x\bigr) $
is the distributional derivative of the vector field $\nabla \Phi(\nabla \Psi(x)+e)- x$.

By Lemma \ref{compos} and the relation $ (D^2_a \Phi(\nabla \Psi) )^{-1} = D^2_a \Psi$ which holds $\nu$-a.e. 
(see \cite{McCann2} or \cite{Vill}), we get
\begin{align*}
\int \mbox{\rm{div}} \bigl(  \nabla \Phi(\nabla \Psi(x)+e)- x\bigr)  e^{-W} &
\ge
\int \bigl( \mbox{\rm Tr} D^2_a \Phi (\nabla \Psi(x)+e)  \cdot  D^2_a \Psi(x)-  d \bigr) \ d\nu
\\&
=\int \bigl( \mbox{\rm Tr} D^2_a \Phi(x+ e)   \cdot  (D^2_a \Phi(x))^{-1}-  d \bigr) \ d \mu.
\end{align*}
It remains to note that
$$
  \mbox{\rm Tr} D^2_a \Phi(x+ e)   \cdot  (D^2_a \Phi(x))^{-1}-  d
-  \log \Bigl[ {\det}_a D^2 \Phi(x)  \cdot ({\det}_a D^2\Phi(x+e))^{-1}\Bigr]
\ge 0.
$$
Indeed, if $A$ and $B$ are symmetric and non-negative, then
$$
\mbox{Tr} AB - d - \log \det AB = \mbox{Tr} C - d - \log \det C,
$$
where $C = B^{1/2} A B^{1/2}$ is a symmetric non-negative matrix.
It is well-known that $\mbox{Tr} C - d - \log \det C >0$. Indeed, the latter is equal to $\sum_{i} (c_i -1 - \log c_i) \ge 0$, where $c_i$ are eigenvalues of $C$. The proof is complete.
\end{proof}

\begin{proposition}
Inequality (\ref{inc-ineq}) implies (\ref{main}).
\end{proposition}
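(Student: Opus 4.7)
The plan is to pass to a second-order limit in (\ref{inc-ineq}): substitute $e = tu$ for a fixed unit vector $u \in \R^d$ and $t > 0$, divide by $t^2/2$, and send $t \downarrow 0$. The resulting inequality should read $\int V_u^2 \, d\mu \ge K \int |D_a^2 \Phi \cdot u|^2 \, d\mu$, after which choosing $u = e_i$ and summing over an orthonormal basis recovers (\ref{main}).

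To identify the left-hand side limit, I would Taylor expand $V(x+tu) = V(x) + t V_u(x) + \frac{t^2}{2} V_{uu}(x) + o(t^2)$ and use the integration-by-parts identities $\int V_u\, d\mu = 0$ (which annihilates the linear term) and $\int V_{uu}\, d\mu = \int V_u^2\, d\mu$; these are immediate when $V \in C^2$ is bounded below, which is the setting of Step~1 in the proof of Theorem \ref{mr}, and they give
\[
\lim_{t \downarrow 0} \frac{2}{t^2}\int [V(x+tu) - V(x)]\, d\mu = \int V_u^2\, d\mu.
\]
For the right-hand side I would invoke Alexandrov's theorem: the convex potential $\Phi$ admits a second-order expansion at a.e.~$x$, the difference quotient $\frac{\nabla \Phi(x+tu) - \nabla \Phi(x)}{t}$ converges a.e.~to $D_a^2 \Phi(x)\, u$ as $t \downarrow 0$, and Fatou's lemma yields
\[
\liminf_{t \downarrow 0} \int \left|\frac{\nabla\Phi(x+tu) - \nabla\Phi(x)}{t}\right|^2 d\mu \ge \int |D_a^2\Phi \cdot u|^2\, d\mu.
\]
Combining these two deductions delivers (\ref{main}) in the smooth setting.

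To remove the smoothness assumption, I would reuse the approximation scheme of Steps 2--3 of Theorem \ref{mr}: replace $V, W$ by smooth $V_n, W_n$ preserving the uniform convexity of $W_n$ and the finite Fisher information bound, apply the smooth-case argument to each approximating pair $(\mu_n, \nu_n)$, and pass to the limit using strong $L^2(\R^d)$-convergence of $\nabla \Phi_n\, \sqrt{\rho_n}$ together with weak lower semicontinuity of the $L^2$-norm applied to $D^2 \Phi_n\, \sqrt{\rho_n}$. The main obstacle is precisely the left-hand side limit under the bare hypothesis $\mathcal{I}_\mu < \infty$: commuting $t \downarrow 0$ with the $\mu$-integral needs either classical smoothness of $V$ or a delicate direct estimate on difference quotients in terms of the weak derivative of $\sqrt{\rho}$. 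Routing around this via the smoothing scheme of Theorem \ref{mr}, rather than by attempting to regularize the difference quotient in place, keeps the derivation short and self-contained.
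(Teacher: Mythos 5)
Your proposal is correct in substance and follows the same broad strategy as the paper (second-order limit in the increment inequality followed by an approximation pass), but the mechanism you use to handle the right-hand side limit is genuinely different, and the difference is worth pointing out.

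The paper uses the \emph{symmetric} second difference $\delta_{te}V = V(x+te)+V(x-te)-2V(x)$ on the left, which kills the linear term in the Taylor expansion pointwise and so avoids relying on $\int V_e\,d\mu=0$ at this stage. For the right-hand side, the paper does not pass through pointwise a.e.~convergence at all: it extracts an $L^2(\mu)$-\emph{weakly} convergent subsequence of the difference quotients $\frac{\nabla\Phi(x\pm t_n e)-\nabla\Phi(x)}{t_n}$, identifies the weak limit as the distributional derivative $\nabla\Phi_e$ by pairing against test functions and integrating by parts, and finishes with weak lower semicontinuity of the $L^2(\mu)$-norm. The payoff of this route is that it directly produces $\nabla\Phi_e$ as an element of $L^2(\mu)$, i.e.~it yields the Sobolev derivative $D^2\Phi$ rather than only the Alexandrov derivative $D^2_a\Phi$, which matters because Theorem~\ref{mr} asserts $\Phi\in W^{2,2}(\mu)$. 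Your Fatou argument (pointwise convergence a.e.~plus $\liminf$) bounds only $\int|D_a^2\Phi\,u|^2\,d\mu$ — i.e.~the absolutely continuous part of the Hessian measure — and says nothing about the possible singular part of $D^2\Phi$. You route around this by reducing to the smooth case first, where $D^2\Phi = D^2_a\Phi$, and then invoking Steps~2--3 of Theorem~\ref{mr}; that is legitimate, and in fact if you restrict to the smooth case Alexandrov's theorem is superfluous, since $\nabla\Phi$ is $C^1$ and the difference quotients converge classically everywhere. Net: your proof works, it is a little more elementary, but it delegates the Sobolev-regularity conclusion entirely to the approximation scheme, whereas the paper's weak-compactness argument is the natural companion to the $W^{2,2}(\mu)$ claim and also sidesteps the one-sided-quotient bookkeeping you need to verify that $\int V_u\,d\mu=0$ annihilates the $O(1/t)$ term.
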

\begin{proof}
Following the arguments of Theorem \ref{mr} we see that it is sufficient to establish implication
(\ref{inc-ineq})  $\Rightarrow$ (\ref{main}) for a nice potential $V$.
By Theorem \ref{increment}
\begin{align*}
 \int & \frac{V(x+te)  + V(x-te) - 2 V(x)}{t^2} \ d \mu  \\&
\ge \frac{K}{2t^2} \int \Bigl( \| \nabla \Phi(x+te) - \nabla \Phi(x)\|^2 + \| \nabla \Phi(x-te) - \nabla \Phi(x)\|^2\Bigr) \ d\mu.
\end{align*}
Thus, without loss of generality we may assume that $V$ satisfies
$$
\frac{\exp( V(x) - V(x+te)) -1 }{t} \to V_e \  \mbox{in} \ L^2(\mu), \  t \to 0
$$
and
$$
 {\lim}_{t \to 0} \frac{1}{t^2}\int (V(x+te) + V(x-te) - 2 V(x)) \ d \mu = \int V_{ee} \ d\mu
$$
for every $e$.
Extract $L^2(\mu)$-weakly convergent subsequences $\bigl\{ \frac{\nabla \Phi(x \pm t_n e) - \nabla \Phi(x)}{t_n} \bigr\}$ 
(we keep the same index $n$).
Note that
 \begin{align*}
& \int \frac{\nabla \Phi(x + t_n e) - \nabla \Phi(x)}{t_n}  \  \xi \ d\mu
= \int \nabla \Phi(x) \frac{ \xi(x-t_ne) - \xi(x)}{t_n} d\mu
\\& +
\int \nabla \Phi(x) \ \xi  \ \frac{\exp(V(x) - V(x-t_ne)) -1 }{t_n}  \  d\mu.
\end{align*}
Obviously, the latter tends to
$$
- \int \nabla \Phi(x) \xi_{e} \ d\mu
+
\int \nabla \Phi(x) \ \xi  \ V_e \  d\mu.
$$
Hence
$$
\frac{\nabla \Phi(x \pm  t_n e) - \nabla \Phi(x)}{t_n}  \to  \nabla \Phi_e
$$
weakly in $L^2(\mu)$.
By  the properties of the weak convergence.
$$
\int  V^2_e d\mu  = \int V_{ee} \ d\mu
\ge K \int \| \nabla \Phi_e \|^2   \  d\mu.
$$
Applying this to every $e_i$ and taking the sum we complete the proof.
\end{proof}

\section{Dimension-free inequalities}

In this section we prove some essentially infinite-dimensional estimates (which do not contain dimension-dependent constants and
make sense in the infinite-dimensional case).
The results below also hold  (with certain modifications) for the triangular mappings.

\subsection{Gaussian case}
We denote by $\gamma$ the standard Gaussian measure on $\R^d$.
Let $\mu = g \cdot \gamma$, $\nu = \gamma$ and $\nabla \Phi$ be the corresponding optimal transport.
According to the result from Section 3
$$
\int \Bigr| \frac{\nabla g}{g} - x \Bigr|^2 g d \gamma
=
 \int \|D^2 \Phi \|^2_{HS}  g d \gamma
+
\sum_{k=1}^{d} \int \mbox{Tr} \bigl[ (D^2 \Phi)^{-1} D^2 \Phi_{x_k} \bigr]^2 \ g d \gamma.
$$

Note that
$$
\int \Bigr| \frac{\nabla g}{g} - x \Bigr|^2 \ g d \gamma
=
\int \frac{|\nabla g|^2}{g} d \gamma
- 2 \int \langle \nabla g, x\rangle d \gamma
+ \int |x|^2 \ g d \gamma
$$
and
$$
 \int \|D^2 \Phi \|^2_{HS}  g d \gamma
 =
 \int \|D^2 \Phi - \mbox{Id} \|^2_{HS} \  g d \gamma
 + 2 \int \Delta \Phi\ g  d \gamma -d.
$$
Apply integration-by-parts
$$
- 2 \int \langle \nabla g, x\rangle  \ d \gamma
+ \int |x|^2 \ g d \gamma
=
2d - \int |x|^2 \ g d \gamma.
$$
By the change of variables formula
$$
2 \int \Delta \Phi \  g  d \gamma -d =
2 \int \Delta \Phi \ g  d \gamma - \int |\nabla \Phi|^2 \  g d \gamma.
$$
Consequently
\begin{align*}
\int \frac{|\nabla g|^2}{g} d \gamma
&
=
 \int \|D^2 \Phi -\mbox{Id} \|^2_{HS}  g d \gamma
+
 \int ( |x|^2 - |\nabla \Phi|^2) \ g d \gamma
 + 2 \int (\Delta \Phi  - d) g  d \gamma
\\& +
\sum_{k=1}^{d} \int \mbox{Tr} \bigl[ (D^2 \Phi)^{-1} D^2 \Phi_{x_k} \bigr]^2 \ g d \gamma.
\end{align*}
Taking the logarithm of the change of variables formula we get
$$
\log g = \frac{|x|^2}{2} - \frac{|\nabla \Phi|^2}{2} + \log \det D^2 \Phi.
$$
Applying this formula we get the heuristic proof of the following statement:

{\it
Every probability measure $g \cdot \gamma$ with smooth $g$ and smooth $\nabla \Phi$ satisfies the following relation
\begin{align}
\label{infdim}
 \mbox{\rm I}_{\gamma} g  & =   2 \mbox{\rm Ent}_{\gamma} g
- 2 \int \log {\det}_2 (D^2 \Phi - \mbox{\rm Id})\ g d \gamma
\\& \nonumber
+
 \int \|D^2 \Phi -\mbox{\rm Id} \|^2_{HS} \  g d \gamma
 +
\sum_{k=1}^{d} \int \mbox{\rm Tr} \bigl[ (D^2 \Phi)^{-1} D^2 \Phi_{x_k} \bigr]^2 \ g d \gamma,
\end{align}
where
$ \mbox{\rm I}_{\gamma} g =
\int \frac{|\nabla g|^2}{g} d \gamma$ (relative information),
$\mbox{\rm Ent}_{\gamma} g  = \int g \log g \ d\gamma$ (relative entropy),
${\det}_2 (D^2 \Phi - \mbox{\rm Id}) = \det D^2 \Phi \cdot \exp \bigl(d - \Delta \Phi \bigr)$
(the Fredholm-Carleman determinant of  $D^2 \Phi - \mbox{\rm Id}$).
}

\begin{remark}
Since all the terms in the right-hand side are non-negative,  this statement  implies, in particular, the classical logarithmic Sobolev inequality
$$
 \mbox{\rm I}_{\gamma} g   \ge   2 \mbox{\rm Ent}_{\gamma} g
$$
and the Gaussian analog of (\ref{main})
\begin{equation}
\label{Tal2}
\mbox{\rm I}_{\gamma} g  \ge
 \int \|D^2 \Phi -\mbox{\rm Id} \|^2_{HS} \  g d \gamma.
\end{equation}
\end{remark}

\begin{remark}
Identity (\ref{infdim}) holds, for instance, under assumptions: $g$ is smooth, bounded, stricktly positive, $\mbox{\rm I}_{\gamma} g   < \infty$, and
$-D^2 \log g \le c \cdot \mbox{Id}$. See  Step 1 in the proof of Theorem \ref{mr}.

Inequality (\ref{Tal2}) follows immediately from Theorem \ref{mr} under the unique assumption ${\mbox \rm I}_{\gamma}g < \infty$.
\end{remark}

\begin{remark}
\label{extrem}
It was pointed out to the author by Michel Ledoux that (\ref{infdim}) implies the description of the extremals for the classical
log-Sobolev inequality. 
Indeed, the case of equality in (\ref{infdim}) is possible if and only if $D^2 \Phi = \mbox{\rm Id}$, hence $\nabla \Phi$ is linear and $g$
has the form $g = \exp(\langle h, x \rangle - \frac{1}{2} \| h\|^2)$, $h \in \R^d$. 
This result has been established by other methods in \cite{Carlen}.
\end{remark}

\subsection{Log-concave case}

Below we deal with the case $\mu = g e^{-W} \ dx$, $\nu=e^{-W} dx$, where $W$ is convex.
By the above results
$$
\int \Bigl|\frac{\nabla g}{g} -   \nabla W \Bigr|^2 \  g d\mu
\ge
  \int \mbox{Tr} \Bigl[ D^2 \Phi  \cdot D^2W(S)  \cdot D^2 \Phi \Bigr] \ g d \mu.
$$
Rewrite the left-hand side
$$
\int \Bigl|\frac{\nabla g}{g} -   \nabla W \Bigr|^2 \ g d  \mu
=
\int \frac{|\nabla g|^2}{g}\ d\mu
- 2 \int \langle \nabla g, \nabla W \rangle \ d \mu
+ \int \bigl|\nabla W \bigr|^2 \ g d \mu.
$$
Rewrite the right-hand side
\begin{align*}
 \int \mbox{Tr} & \Bigl[ D^2 \Phi  \cdot D^2W(\nabla \Phi)  \cdot D^2 \Phi  \Bigr]  \ g   d \mu
 =
 \int \mbox{Tr} \Bigl[ (D^2 \Phi - \mbox{Id})  \cdot D^2W(\nabla \Phi)  \cdot (D^2 \Phi  -\mbox{Id}) \Bigr] \ g   d \mu
\\& + 2 \int \mbox{div} (\nabla W \circ \nabla \Phi) \ g d \mu
-
 \int \Delta W (\nabla \Phi) \  g d\mu
\\& =
 \int \mbox{Tr} \Bigl[ (D^2 \Phi - \mbox{Id})  \cdot D^2W(\nabla \Phi)  \cdot (D^2 \Phi  -\mbox{Id}) \Bigr]  \ g  d \mu
\\& - 2 \int \langle \nabla g, \nabla W \circ \nabla \Phi \rangle d \mu
+ 2 \int \langle \nabla W, \nabla W \circ \nabla \Phi \rangle \ g \ d \mu
-
 \int \Delta W (\nabla \Phi) \  g d\mu .
\end{align*}
Consequently
\begin{align*}
\int \frac{|\nabla g|^2}{g}\ d\mu &
+ \int \bigl|\nabla W \bigr|^2 \ g d \mu
\\&
\ge
2 \int \langle \nabla g, \nabla W  - \nabla W \circ \nabla \Phi\rangle \ d \mu
+ 2 \int \langle \nabla W, \nabla W \circ \nabla \Phi\rangle \ g \ d \mu
\\&
+  \int \mbox{Tr} \Bigl[ (D^2 \Phi- \mbox{Id})  \cdot D^2W(\nabla \Phi)  \cdot (D^2 \Phi -\mbox{Id}) \Bigr] \  g   d \mu
-
 \int \Delta W (\nabla \Phi) \  g d\mu .
\end{align*}
This implies
\begin{align*}
\int & \frac{|\nabla g|^2}{g}\ d \mu
+ \int \bigl|\nabla W \bigr|^2 \ g d \mu
- 2 \int \langle \nabla W, \nabla W \circ \nabla \Phi\rangle \ g \ d \mu
+ \int |\nabla W \circ \nabla \Phi|^2 \ g d \mu \ge
\\&
\ge
2 \int \langle \nabla g, \nabla W  - \nabla W \circ \nabla \Phi\rangle \ d \mu
\\&
 +  \int \mbox{Tr} \Bigl[ (D^2 \Phi- \mbox{Id})  \cdot D^2W(\nabla \Phi)  \cdot (D^2 \Phi -\mbox{Id}) \Bigr] \ g  d \mu
+
 \int \bigl[ |\nabla W \circ \nabla \Phi|^2 - \Delta W (\nabla \Phi) \bigr] \  g d\mu .
\end{align*}
Taking into account that
$$
 \int \bigl[ |\nabla W \circ \nabla \Phi|^2 - \Delta W (\nabla \Phi) \bigr] \  g d\mu
=
 \int \bigl[ |\nabla W |^2 - \Delta W \bigr] \   d\mu  = 0
$$
we get
\begin{equation}
\int \Bigl|  \frac{\nabla g}{g} - \bigl(  \nabla W  - \nabla W \circ \nabla \Phi \bigr) \Bigr|^2\  g d \mu
\ge  \int \mbox{Tr} \Bigl[ (D^2 \Phi- \mbox{Id})  \cdot D^2W(\nabla \Phi)  \cdot (D^2 \Phi -\mbox{Id}) \Bigr] \ g  d \mu.
\end{equation}

By the Cauchy inequality
\begin{equation}
\label{non-gauss}
2 \int \frac{|\nabla g|^2}{g}\ d \mu
+ 2 \int \bigl|\nabla W  - \nabla W \circ \nabla \Phi\bigr|^2  \ g  d \mu
\ge  \int \mbox{Tr} \Bigl[ (D^2 \Phi- \mbox{Id})  \cdot D^2W(\nabla \Phi)  \cdot (D^2 \Phi -\mbox{Id}) \Bigr] \ g  d \mu.
\end{equation}

Thus in order to estimate
$$\int \mbox{Tr} \Bigl[ (D^2\Phi- \mbox{Id})  \cdot D^2W(\nabla \Phi)  \cdot (D^2\Phi -\mbox{Id}) \Bigr] \ g  d \mu$$
(or $\int \|D^2\Phi - \mbox{Id}\|^2_{HS} \  g  d \mu$ for uniformly convex $W$) it is sufficient to get a bound for
$$
\int \bigl|\nabla W  - \nabla W \circ \nabla \Phi \bigr|^2  \ g  d \mu.
$$
Some estimates of quantities of this type are established in \cite{BoKo}. We give below the proof for the most simple case (the potential has a quadratic-like growth).

\begin{theorem}
Assume that for some $K>0$
$$
W(x) - \langle \nabla W(y),x - y \rangle - W(y) \ge \frac{K}{2} |\nabla W(x) - \nabla W(y)|^2
$$
and
$
D^2 W \ge K \cdot \mbox{\rm Id}.
$
Then
$$
\frac{K}{2} \int \|D^2 \Phi - \mbox{\rm Id}\|^2_{HS} \ g  d \mu
\le
\frac{2}{K} \int g \log g \ d\mu + \int \frac{|\nabla g|^2}{g} \ d \mu.
$$
In particular, the estimate holds for some $K>0$ if $C_1 \cdot \mbox{\rm Id} \le D^2 W \le C_2  \cdot \mbox{\rm Id}.$
\end{theorem}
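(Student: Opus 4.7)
The plan is to deduce the theorem from inequality (\ref{non-gauss}) by controlling its second remainder term. Applying $D^2 W \ge K \cdot \mbox{Id}$ on the right-hand side of (\ref{non-gauss}) yields
\[
K \int \|D^2 \Phi - \mbox{Id}\|^2_{HS}\ g d\mu \le 2\int \frac{|\nabla g|^2}{g}\ d\mu + 2 A,
\qquad A := \int \bigl|\nabla W - \nabla W \circ \nabla \Phi\bigr|^2\ g d\mu.
\]
Dividing by $2$, the claim becomes equivalent to the entropy estimate $A \le \frac{2}{K}\int g \log g\ d\mu$, so the task reduces to bounding $A$ by the entropy. This is where the new Bregman-gradient hypothesis enters.

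To obtain this bound I would insert $y = \nabla \Phi(x)$ into the first hypothesis of the theorem and integrate against $g d\mu$, yielding
\[
\frac{K}{2} A \le \int \bigl[W - W(\nabla \Phi)\bigr]\ g d\mu - \int \langle \nabla W(\nabla \Phi), x - \nabla \Phi\rangle\ g d\mu.
\]
The first term is handled by taking the logarithm of the Monge-Amp\`ere equation and using $V = W - \log g$: this gives $W - W(\nabla \Phi) = \log g - \log \det D^2 \Phi$ pointwise, hence the integral equals $\mbox{Ent}_\mu g - \int \log \det D^2 \Phi\ g d\mu$. The second term is handled by changing variables $z = \nabla \Phi(x)$ (which maps $g d\mu$ to $e^{-W}dx$), integrating by parts against the $e^{-W}$ weight, and then transforming back via the identity $D^2 \Psi(\nabla \Phi) = (D^2 \Phi)^{-1}$ for $\Psi = \Phi^*$; the resulting expression is $\int \bigl[\mbox{Tr}(D^2 \Phi)^{-1} - d\bigr]\ g d\mu$.

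Assembling the two computations gives
\[
\frac{K}{2} A \le \mbox{Ent}_\mu g - \int \bigl[\log \det D^2 \Phi + \mbox{Tr}(D^2 \Phi)^{-1} - d\bigr]\ g d\mu.
\]
In the eigenbasis of $D^2 \Phi$ the bracketed integrand becomes $\sum_i \bigl(\log \lambda_i + 1/\lambda_i - 1\bigr)$, and each summand is nonnegative because $\lambda \mapsto \log \lambda + 1/\lambda - 1$ attains its global minimum, equal to zero, at $\lambda = 1$. Therefore $A \le \frac{2}{K} \mbox{Ent}_\mu g$ and the theorem follows.

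The main obstacle I foresee is the rigorous justification of the integration by parts and the Alexandrov-sense change of variables, which demand sufficient regularity of $\Phi$ and $\Psi$. I would handle this exactly as in Theorem \ref{mr}, Steps 2--3: approximate $W$ and $g$ by smooth analogues with Hessians controlled from above and below, carry out the identity in the smooth setting where Caffarelli's regularity makes everything pointwise, and then pass to the limit using the weak $L^2$-compactness of $D^2 \Phi_n\cdot \sqrt{\rho_n}$ derived there. The ``in particular'' clause follows from the Baillon--Haddad theorem: the upper bound $D^2 W \le C_2 \mbox{Id}$ is equivalent to $\nabla W$ being $C_2$-Lipschitz, which in turn is equivalent to the Bregman-gradient hypothesis with constant $K = 1/C_2$, so combined with the strong-convexity constant $C_1$ the theorem applies with $K = \min(C_1, 1/C_2) > 0$.
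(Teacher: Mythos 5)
Your proposal is correct and follows essentially the same route as the paper's own proof: reduce via (\ref{non-gauss}) to bounding $A=\int|\nabla W-\nabla W\circ\nabla\Phi|^2\,d\mu$, then use the Monge--Amp\`ere relation to express the log-density as a Bregman divergence of $W$ plus the nonnegative term $\mbox{Tr}(D^2\Phi)^{-1}-d+\log\det D^2\Phi$, and finally apply the Bregman-gradient hypothesis to get $A\le\frac{2}{K}\int g\log g\,d\nu$. The paper starts from the change-of-variables identity for $T=\nabla\Phi^{*}$ on the $\nu$-side and rearranges, while you start from the hypothesis evaluated at $y=\nabla\Phi(x)$ on the $\mu$-side and change variables in one term; these are algebraically identical computations. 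Your invocation of Baillon--Haddad to justify the ``in particular'' clause (taking $K=\min(C_1,1/C_2)$) is a clean way to make explicit what the paper leaves implicit.
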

\begin{proof}
The result follows from Theorem \ref{mr}, the above computations, and the estimate below. The proof of the result can be easily reduced to the case
of smooth $g$ and $T$ (see the proof of Theorem \ref{mr}). By the change of variables formula for $T = (\nabla \Phi)^{-1} = \nabla \Phi^*$ one has
$$
\log g(T) - W(T) + \log \det DT = - W(x) .
$$
Rewrite it in the following way
$$
\log g(T) = W(T) - \langle \nabla W(x),T(x) - x \rangle - W(x)  + \Bigl[ \langle \nabla W(x),T(x) - x \rangle -  \log \det DT \Bigr].
$$
Note that
$$
\int \Bigl[ \langle \nabla W(x),T(x) - x \rangle -  \log \det DT \Bigr] \ d\mu
=
\Bigl[ \mbox{Tr} DT - d -  \log \det DT \Bigr] \ d\mu \ge 0.
$$
Hence
$$
\int \log g(T) \ d \mu \ge  \int \Bigl[W(T) - \langle \nabla W(x),T(x) - x \rangle - W(x) \Bigr] \ d\mu .
$$
By the change of variables
$$
\int g  \log g \ d \mu \ge  \int \Bigl[ W(x) - \langle \nabla W(\nabla \Phi(x)) ,x - \nabla \Phi \rangle - W(\nabla \Phi) \Bigr] \ g(x) d\mu .
$$
This inequality, (\ref{non-gauss}), and the assumptions of the Theorem imply the result.
\end{proof}

\section{$L^p$-estimates and the Caffarelli's theorem}

We generalize below the results of the previous sections and prove some corresponding $L^p$-estimates.
As a particular case we get the contraction result of  Caffarelli. Note that some dimension-free $L^p$-generalizations 
of the Talagrand transportation inequality have been obtained in \cite{BoKo}.
In particular, it was shown in \cite{BoKo} that $\|\nabla \Phi\|_{L^{2p}(\mu)}$ is controlled by
$\int g |\log g|^p \ d\mu$, $p \ge 1$ for any $\mu$ satisfying a log-Sobolev inequality.

The proof of the result below follows the arguments of Theorem \ref{increment}.
That is why we omit the details and just give a short outline of the proof.

\begin{theorem}
\label{lp-est}
 Assume that $D^2 W \ge K \cdot \mbox{\rm Id}$. Then for every unit $e$, $p\ge0$,  and $r= \frac{p+2}{2}$ one has
$$
K \| \Phi^2_{ee} \|_{L^{r}(\mu)} \le \| (V_{ee})_{+} \|_{L^{r}(\mu)},
$$
$$
K \| \Phi^2_{ee} \|_{L^{r}(\mu)} \le \frac{p+4}{4}  \| V^2_{e} \|_{L^{r}(\mu)}.
$$
\end{theorem}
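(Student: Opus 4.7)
The plan is to test the above-tangential identity (\ref{xi-main}) — rigorously established in Step~1 of the proof of Theorem~\ref{mr}, with $e_i$ replaced by an arbitrary unit $e$ and with $D^2\Phi_e\cdot e=\nabla\Phi_{ee}$ — against the weight $\xi=\Phi_{ee}^p$ and then close the resulting inequalities via H\"older's inequality. The necessary smoothing of $V,W$ and the replacement of $\xi$ by admissible test functions of the form $\Phi_{ee}^p\cdot\eta(\nabla\Phi)$ with $0\le\eta\le1$ compactly supported is carried out exactly as in Steps~1--3 of Theorem~\ref{mr}: under the $L^{\infty}$-bound on $\|D^2\Phi\|$ obtained there, the boundary terms produced by the cutoff are controlled by $\|\nabla\eta\|_{L^2(\nu)}$ and $\|D^2\eta\|_{L^1(\nu)}$ and vanish as $\eta\to 1$, and the subsequent weak $L^2(\mu)$-convergence of $D^2\Phi_n\sqrt{\rho_n}$ gives lower semicontinuity of the relevant $L^r$ norms when $V,W$ are finally allowed to be nonsmooth.

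For the first inequality I would integrate by parts in direction $e$ to rewrite (\ref{xi-main}) in the equivalent form
\begin{align*}
\int V_{ee}\,\xi\,d\mu
&= \int\bigl\langle D^2W(\nabla\Phi)\nabla\Phi_e,\nabla\Phi_e\bigr\rangle\xi\,d\mu
+ \int\mathrm{Tr}\bigl[(D^2\Phi)^{-1}D^2\Phi_e\bigr]^2\xi\,d\mu\\
&\quad+ \int\bigl\langle\nabla\xi,(D^2\Phi)^{-1}\nabla\Phi_{ee}\bigr\rangle d\mu.
\end{align*}
With $\xi=\Phi_{ee}^p\ge 0$ and $\nabla\xi=p\Phi_{ee}^{p-1}\nabla\Phi_{ee}$, all three terms on the right are nonnegative (the last one because $(D^2\Phi)^{-1}$ is PSD), and $D^2W\ge K\,\mathrm{Id}$ combined with $|D^2\Phi\cdot e|^2\ge\Phi_{ee}^2$ makes the first one dominate $K\int\Phi_{ee}^{p+2}d\mu$. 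Thus $\int V_{ee}\Phi_{ee}^p\,d\mu\ge K\int\Phi_{ee}^{p+2}d\mu$; replacing $V_{ee}$ by $(V_{ee})_+$ and applying H\"older with exponents $r=(p+2)/2$ and $r'=(p+2)/p$ yields the first estimate.

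For the second inequality I would keep (\ref{xi-main}) in its $V_e^2$-form with the same $\xi$ and abbreviate $A=\int V_e^2\Phi_{ee}^p d\mu$, $B=\int\Phi_{ee}^{p-1}\langle(D^2\Phi)^{-1}\nabla\Phi_{ee},\nabla\Phi_{ee}\rangle d\mu$, $C=\int\Phi_{ee}^{p+2}d\mu$. Dropping the nonnegative Hilbert--Schmidt term and using $\partial_e\xi=p\Phi_{ee}^{p-1}\Phi_{eee}$, the identity reads
\[
A \ge KC + p\!\int V_e\,\Phi_{ee}^{p-1}\Phi_{eee}\,d\mu + pB.
\]
The indefinite cross term is controlled by the pointwise Cauchy--Schwarz inequality in the inner product induced by $D^2\Phi$, $\Phi_{eee}^2=\langle e,\nabla\Phi_{ee}\rangle^2\le\Phi_{ee}\bigl\langle(D^2\Phi)^{-1}\nabla\Phi_{ee},\nabla\Phi_{ee}\bigr\rangle$, followed by Cauchy--Schwarz under the integral sign: this gives $\bigl|p\!\int V_e\Phi_{ee}^{p-1}\Phi_{eee}d\mu\bigr|\le p\sqrt{AB}$, so $A+p\sqrt{AB}\ge KC+pB$. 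The finishing step is Young's inequality $p\sqrt{AB}\le (p/4)A + pB$, whose parameter is chosen precisely so as to absorb the $+pB$ on the right, leaving $A(1+p/4)\ge KC$. Combined with the H\"older bound $A\le\|V_e^2\|_{L^r(\mu)}\,C^{p/(p+2)}$ this produces the claimed constant $(p+4)/4$.

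I expect the main obstacle to be technical rather than algebraic: the rigorous insertion of the non-admissible weight $\xi=\Phi_{ee}^p$ into (\ref{xi-main}) demands the full three-step approximation of Theorem~\ref{mr} (smoothing of $V$ and $W$ to obtain $\|D^2\Phi\|\le C$, insertion of the cutoff $\eta(\nabla\Phi)$, and weak-convergence passage to the limit) together with a lower-semicontinuity argument for the $L^r(\mu)$-norms. The two new ingredients specific to the second inequality --- the pointwise Cauchy--Schwarz in the Hessian metric, and the sharp choice of the Young parameter $\alpha=p/2$ --- are exactly what produce the constant $(p+4)/4$ and distinguish it from the first.
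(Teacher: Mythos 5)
Your argument is correct and reproduces the paper's proof. The only cosmetic differences are that the paper derives the key weighted inequality via the finite-increment route (multiplying the increment inequality by $(\delta_{te}\Phi)^p$ and letting $t\to0$) rather than by inserting $\xi=\Phi_{ee}^p$ directly into the infinitesimal identity, and that the paper applies the weighted Cauchy--Schwarz/Young step pointwise in a single shot rather than in your two-stage form $p\sqrt{AB}\le\frac{p}{4}A+pB$; both yield the same constant $\frac{p+4}{4}$.
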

\begin{proof}

 Fix unit vector $e$, apply the change of variables formula
and the uniform convexity of $W$
\begin{align*}
V(x+ te) - & V(x)  \ge 
\langle \nabla \Phi(x+te)- \nabla \Phi(x), \nabla W(\nabla \Phi(x))  \rangle \\& + \frac{K}{2} |\nabla \Phi(x+te)- \nabla \Phi(x)|^2
 - \log \Bigl[ ({\det}_a D^2 \Phi(x))^{-1}  \cdot {\det}_a D^2\Phi(x+te)\Bigr].
\end{align*}
Multiply this identity by $(\delta_{te} \Phi)^p $, where $p \ge 0$ and
$$
\delta_{te} \Phi =  \Phi(x+te) + \Phi(x-te) -2 \Phi(x)
$$
and integrate over $\mu$.
Integrating by parts
we get
\begin{align*}
& \int \langle   \nabla \Phi(x+te)- \nabla \Phi(x), \nabla W(\nabla \Phi(x))  \rangle 
(\delta_{te} \Phi)^p 
\ d\mu \\& =
\int \langle  \nabla \Phi(x+te) \circ (\nabla \Psi)- x, \nabla W(x)  \rangle 
(\delta_{te} \Phi)^p  \circ (\nabla \Psi)
\ d\nu
\\&
\ge
\int \Bigl( \mbox{Tr} \bigl[ D^2 _a\Phi(x+te) \cdot (D^2_a \Phi)^{-1} \bigr] \circ  (\nabla \Psi)  - d \Bigr) (\delta_{te} \Phi)^p  \circ (\nabla \Psi) \ d\nu
\\&
+ 
p \int \Big\langle \nabla \Phi(x+te) \circ (\nabla \Psi)- x, (D^2 \Psi)   \nabla \delta_{te} \Phi  \circ (\nabla \Psi)  \Big\rangle (\delta_{te} \Phi)^{p-1}\circ (\nabla \Psi)  \ d\nu.
\end{align*}
Applying the inequality $\mbox{Tr} A - d - \log \det A \ge 0$ which is valid for compositions of symmetric positive matrices we get
\begin{align*}
\int \bigl( V(x+ te) - & V(x) \bigr) (\delta_{te} \Phi)^p d\mu 
\ge
\frac{K}{2} \int |\nabla \Phi(x+te)- \nabla \Phi(x)|^2 (\delta_{te} \Phi)^p \ d\mu \\&
+ p \int \Big\langle \nabla \Phi(x+te) - \nabla \Phi(x), (D^2 \Psi) \circ \nabla \Phi(x)   \nabla \delta_{te} \Phi  \Big\rangle (\delta_{te} \Phi)^{p-1}  \ d\mu.
\end{align*}
Applying the same inequality to $-te$ and taking the sum we get
\begin{align*}
\int  & \bigl( V(x+ te) + V(x-te) - 2V(x) \bigr) (\delta_{te} \Phi)^p d\mu 
\\&
\ge
\frac{K}{2} \int |\nabla \Phi(x+te)- \nabla \Phi(x)|^2 (\delta_{te} \Phi)^p \ d\mu  + 
\frac{K}{2} \int |\nabla \Phi(x-te)- \nabla \Phi(x)|^2 (\delta_{te} \Phi)^p \ d\mu \\&
+ p \int \Big\langle \nabla \delta_{te} \Phi , (D^2_a \Phi)^{-1}    \nabla \delta_{te} \Phi  \Big\rangle (\delta_{te} \Phi)^{p-1}  \ d\mu.
\end{align*}
Note that the last term is non-negative.
Dividing by $t^{2p}$ and passing to the limit we obtain
\begin{equation}
\label{lp-sd+}
\int V_{ee}  \Phi_{ee}^p  \ d \mu 
\ge K
\int \| D^2 \Phi \cdot e\|^2   \Phi_{ee}^p  \ d \mu
+ p \int \langle (D^2 \Phi)^{-1}  \nabla \Phi_{ee}, \nabla \Phi_{ee} \rangle  \Phi_{ee}^{p-1} \ d \mu.
\end{equation}
For the proof of the first part we note that
$$
\int V_{ee} \Phi^p_{ee} \ d\mu 
\ge 
K \int  \Phi_{ee}^{p+2}  \ d \mu.
$$
Applying
the H{\"o}lder inequality
one gets
$$
 \| (V_{ee})_{+}\|_{L^{(p+2)/2}(\mu)}  \| \Phi_{ee}^p\|_{L^{(p+2)/p}(\mu)}
\ge \int V_{ee} \Phi^p_{ee} \ d\mu .
$$
This readily implies the result.

To prove the second part we integrate by parts the left-hand side
\begin{align*}
\int V_{ee} \Phi^{p}_{ee} \  d \mu &
=
-p \int V_e \Phi_{eee} \Phi^{p-1}_{ee} \ d\mu +  \int V^2_e \Phi^{p}_{ee} \  d \mu
\\&
= - p \int \langle \nabla \Phi_{ee}, V_e \cdot e \rangle \Phi^{p-1}_{ee} d\mu +  \int V^2_e \Phi^{p}_{ee} \  d \mu.
\end{align*}
By the Cauchy inequality the latter does not exceed
$$
 p \int \langle (D^2 \Phi)^{-1}  \nabla \Phi_{ee}, \nabla \Phi_{ee} \rangle  \Phi_{ee}^{p-1} \ d \mu
+ \frac{p}{4} \int V^2_e \langle (D^2 \Phi) e, e \rangle  \Phi_{ee}^{p-1} \ d \mu +  \int V^2_e \Phi^{p}_{ee} \  d \mu.
$$
Inequality (\ref{lp-sd+}) implies
$$
\frac{p+4}{4} \int V^2_e   \Phi_{ee}^{p} \ d \mu \ge 
 K \int |\nabla \Phi_e|^2 \Phi_{ee}^p  \ d \mu \ge K \int  \Phi_{ee}^{p+2}  \ d \mu.
$$
The rest of the proof is the same as in the first part.

\end{proof}

\begin{corollary}
In the limit  $p\to\infty$ we obtain  the contraction theorem of Caffarelli
$$
K \| \Phi_{ee} \|^2_{L^{\infty}(\mu)} \le \| (V_{ee})_{+} \|_{L^{\infty}(\mu)}.$$
\end{corollary}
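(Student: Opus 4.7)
The plan is to take $p\to\infty$ directly in the first $L^r$-estimate of Theorem \ref{lp-est}. With $r=(p+2)/2$, that theorem provides
\[
K \| \Phi_{ee}^2 \|_{L^r(\mu)} \le \| (V_{ee})_+\|_{L^r(\mu)}.
\]
The left-hand side rewrites as $\|\Phi_{ee}^2\|_{L^r(\mu)} = \|\Phi_{ee}\|_{L^{2r}(\mu)}^2$ (which is unambiguous since $\Phi_{ee}\ge 0$ by convexity of $\Phi$), so the inequality is equivalent to $K \|\Phi_{ee}\|_{L^{2r}(\mu)}^2 \le \|(V_{ee})_+\|_{L^r(\mu)}$. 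Because $\mu$ is a probability measure, the standard identity $\lim_{q\to\infty}\|f\|_{L^q(\mu)}=\|f\|_{L^\infty(\mu)}$ holds for every measurable $f$, with the $L^q$-norm non-decreasing in $q$ as soon as it is finite. Applying this fact with $f=\Phi_{ee}$, $q=2r$ on the left, and with $f=(V_{ee})_+$, $q=r$ on the right, and then letting $p\to\infty$, one obtains
\[
K \|\Phi_{ee}\|_{L^\infty(\mu)}^2 \le \|(V_{ee})_+\|_{L^\infty(\mu)},
\]
which is the claimed Caffarelli contraction estimate.

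I do not foresee any real obstacle. The only point worth spelling out is a dichotomy in the limiting argument: if $\|(V_{ee})_+\|_{L^\infty(\mu)}=+\infty$ the conclusion is vacuous, so one may assume this quantity is finite, in which case $\|(V_{ee})_+\|_{L^r(\mu)}$ stays uniformly bounded in $r$; this in turn forces $\|\Phi_{ee}\|_{L^{2r}(\mu)}$ to remain bounded, hence $\Phi_{ee}\in L^\infty(\mu)$, and the two $L^r$-norms pass termwise to their $L^\infty$-limits. Specializing to $\mu=\gamma$ (so that $V(x)=\tfrac12|x|^2+c$ and $\|(V_{ee})_+\|_{L^\infty}=1$) and $\nu=e^{-W}\,dx$ with $D^2W\ge K\cdot\mbox{Id}$ recovers the original Caffarelli statement that the optimal transport $\nabla\Phi$ sending $\gamma$ onto $\nu$ is globally $1/\sqrt{K}$-Lipschitz.
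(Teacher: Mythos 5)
Your proposal is correct and is essentially the argument the paper leaves implicit: apply the first estimate of Theorem \ref{lp-est}, rewrite $\|\Phi_{ee}^2\|_{L^r}=\|\Phi_{ee}\|_{L^{2r}}^2$, bound the right-hand side by $\|(V_{ee})_+\|_{L^\infty}$ using monotonicity of $L^q$-norms on a probability space, and let $r\to\infty$. The dichotomy you note (the statement is vacuous when $\|(V_{ee})_+\|_{L^\infty}=\infty$) and the specialization to $\mu=\gamma$ are both the right remarks to make this precise; no gaps.
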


\section{Operator norm estimates}

This section gives a partial answer to the question asked to the author by Emanuel Milman.
Is it possible to estimate effectively (say, without dimension dependence)
the operator norm of $D^2 \Phi$?
Estimates of this type would have interesting consequences for Sobolev-type
inequalities of log-concave measures.

Since the operator norm is
controlled by the Hilbert-Schmidt norm, the previous results imply trivally the following estimate
$$
\mathcal{I}_{\mu}  \ge K \int \|D^2 \Phi\|^2_{HS} \ d\mu \ge K \int \|D^2 \Phi\|^2 \ d\mu.
$$
We emphasize, however, that for many problems the assumption $\mathcal{I}_{\mu} < \infty$ is too strong and leads
to dimension dependent results.

The main aim of this section is to show that for the uniformly log-concave $\nu$
$$
\int \|(D^2 V)^{+}\| d\mu \ge K \int  \|D^2 \Phi\|^2 \ d\mu .
$$

\begin{lemma}
\label{eigenvec-ineq}
Assume that  $\Phi$ is smooth. Then for every smooth  vector field $v$ and  every nonnegative
test function $\eta$ the following inequality holds
\begin{align*}
\int \langle D^2 V v,v \rangle \eta & \  d \mu  
\ge K \int \| D^2 \Phi \cdot v \|^2  \eta  \  d \mu
+ \int \langle (D^2 \Phi)_v \cdot v  , (D^2 \Phi)^{-1}\nabla \eta \rangle  \ d\mu
\\&
+ 2  \int \mbox{\rm Tr} \bigl(   (D^2 \Phi)_v   \cdot  Dv  \cdot  (D^2 \Phi)^{-1} \bigr)  \eta  \ d \mu
+ \int \mbox{\rm Tr} \Bigl[ (D^2 \Phi)^{-1} (D^2 \Phi)_v \bigr]^2 \eta \ d\mu.
\end{align*}

\end{lemma}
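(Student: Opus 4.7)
The plan is to mirror the derivation of (\ref{Fisher-i}) in Section 2, replacing the fixed direction $e_i$ by the vector field $v$ and carrying the test weight $\eta$ through the integration by parts so that the $\nabla\eta$-term appears. I would work under the smoothness hypothesis of the statement (as in Step 1 of Theorem \ref{mr}), with $\eta$ smooth and compactly supported. The first step is a pointwise identity for $\langle D^2V\cdot v, v\rangle$. Differentiating $V = W(\nabla\Phi)-\log\det D^2\Phi$ twice in $x_i, x_l$ and contracting with $v_iv_l$ (treating $v$ as constant for this computation only) yields, via the standard formulae for $\partial\log\det$ and $\partial(D^2\Phi)^{-1}$,
\begin{align*}
\langle D^2V\cdot v, v\rangle &= \langle D^2W(\nabla\Phi)\cdot D^2\Phi\cdot v,\, D^2\Phi\cdot v\rangle + \langle\nabla W(\nabla\Phi),\, (D^2\Phi)_v\cdot v\rangle \\
&\quad + \mbox{Tr}\bigl[(D^2\Phi)^{-1}(D^2\Phi)_v\bigr]^2 - \mbox{Tr}\bigl[(D^2\Phi)^{-1} M\bigr],
\end{align*}
where $M := \sum_{ij}v_iv_j\,\partial_{x_ix_j}(D^2\Phi)$. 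Uniform convexity of $W$ bounds the first term below by $K\|D^2\Phi\cdot v\|^2$, which will supply the first term on the right-hand side of the lemma.

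Next, the crucial step is to process $\int\langle\nabla W(\nabla\Phi),\,(D^2\Phi)_v\cdot v\rangle\,\eta\,d\mu$ by a weighted version of (\ref{MAIBP}). Setting $F := (D^2\Phi)_v\cdot v$, pushing forward via $\nabla\Phi$ onto $\nu$, and applying $\int\langle\nabla W, H\rangle e^{-W}dy = \int\mbox{div}(H)\,e^{-W}dy$ to $H = (\eta F)\circ\nabla\Psi$ (with $\Psi=\Phi^*$ and $(D^2\Psi)\circ\nabla\Phi = (D^2\Phi)^{-1}$) gives
$$
\int\langle\nabla W(\nabla\Phi), F\rangle\,\eta\,d\mu = \int \eta\,\mbox{Tr}\bigl[(D^2\Phi)^{-1}\cdot DF\bigr]\,d\mu + \int\langle (D^2\Phi)^{-1}F,\nabla\eta\rangle\,d\mu.
$$
I would then expand $DF$ from the formula $F_i = \sum_{jk}v_jv_k\,\Phi_{x_ix_jx_k}$: the two contributions obtained by differentiating $v_j$ and $v_k$ coalesce, by the symmetry $\Phi_{x_ix_jx_k}=\Phi_{x_ix_kx_j}$, into a single factor of $2$, while differentiating $\Phi_{x_ix_jx_k}$ in the remaining index reproduces exactly the matrix $M$; hence $DF = 2(D^2\Phi)_v\cdot Dv + M$. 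Substituting back, the $\mbox{Tr}[(D^2\Phi)^{-1}M]$ piece cancels the leftover $-\mbox{Tr}[(D^2\Phi)^{-1}M]$ from the pointwise identity, and after using cyclicity of the trace and the symmetry of $(D^2\Phi)^{-1}$ to transfer it onto $\nabla\eta$, the four surviving terms are precisely those on the right-hand side of the lemma.

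The main obstacle I anticipate is the bookkeeping in the expansion of $DF$: because $v$ is now a vector field rather than a constant direction, the chain rule produces extra $Dv$-terms that must be tracked carefully, and the exact factor of $2$ in the statement emerges only after invoking the symmetry of the third-derivative tensor of $\Phi$ in the two contracted indices. The integration by parts itself is no more delicate than (\ref{MAIBP}) once the a priori smoothness and boundedness of $D^2\Phi$ available in Step 1 of Theorem \ref{mr} are in force; in the general case the identity would first be established for nice data and then extended by the same approximation scheme as in that proof.
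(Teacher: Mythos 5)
Your argument is correct, and it is a genuinely different route from the paper's. You start from the pointwise second-derivative identity obtained by differentiating $V = W(\nabla\Phi)-\log\det D^2\Phi$ twice and contracting with $v\otimes v$ (treating $v$ as frozen), then integrate by parts in the $\eta$-weighted sense, and carry out the $DF$ bookkeeping — the symmetry of $\Phi_{x_kx_jx_l}$ in $(j,l)$ yields the factor $2$ in $DF = 2(D^2\Phi)_v\cdot Dv + M$, and the contribution $\mbox{Tr}[(D^2\Phi)^{-1}M]$ cancels exactly against the corresponding term in the pointwise identity. The only inequality in your chain is $\langle D^2W(\nabla\Phi)D^2\Phi v, D^2\Phi v\rangle \ge K\|D^2\Phi v\|^2$; everything else is an equality, in the spirit of the identity (\ref{Fisher-i}) in Section 2. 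The paper instead follows its "above-tangential" finite-difference scheme throughout: it applies $K$-uniform convexity of $W$ to the increment $V(x+tv)-V(x)$, integrates against $\eta$, integrates by parts through the change of variables (which is where the factor $I+tDv$ and hence the $Dv$-term appear), records the nonnegative defect $\mathcal{T}(x,tv)=\mbox{Tr}[D^2\Phi(x+tv)(D^2\Phi(x))^{-1}]-d-\log[\det D^2\Phi(x)(\det D^2\Phi(x+tv))^{-1}]$, symmetrizes over $\pm tv$, divides by $t^2$, and passes to the limit. What each buys: your route gives the underlying equality explicitly and is more transparent in its bookkeeping, but requires the fourth-order derivative tensor $M$ and the third-order array $DF$ to exist pointwise. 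The paper's route differentiates less (it only ever uses second differences of $D^2\Phi$ and lets the nonnegativity $\mathcal{T}\ge 0$ absorb the second-order Taylor remainder), so it parallels Theorem \ref{increment} and would extend more directly to the Alexandrov-regularity setting used elsewhere in the paper. Under the smoothness hypothesis of the lemma both are equally rigorous.
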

\begin{proof}
It follows from the change of variables formula 
$V = W(\nabla \Phi) - \log {\det}  D^2 \Phi$
that
\begin{align*}
V(x+ tv) - V(x)  & =
 W(\nabla \Phi(x+tv)) - W(\nabla \Phi(x))
\\& - \log \Bigl[ ({\det} D^2 \Phi(x))^{-1}  \cdot {\det} D^2\Phi(x+tv)\Bigr].
\end{align*}
By the $K$-uniform convexity of $W$
\begin{align*}
 W(\nabla \Phi(x+tv)) - W(\nabla \Phi(x))  &
\ge
\langle \nabla \Phi(x+tv)- \nabla \Phi(x), \nabla W(\nabla \Phi(x))  \rangle
\\& + \frac{K}{2} |\nabla \Phi(x+tv)- \nabla \Phi(x)|^2.
\end{align*}
This implies
\begin{align*}
\int (V(x+ tv) - V(x))  \eta \ d\mu &
\ge
\int \frac{K}{2} |\nabla \Phi(x+tv)- \nabla \Phi(x)|^2  \eta \ d\mu
\\&
+ \int  \langle \nabla \Phi(x+tv)- \nabla \Phi(x), \nabla W(\nabla \Phi(x))  \rangle  \eta \ d\mu
\\& - \int \log \Bigl[ ({\det} D^2 \Phi(x))^{-1}  \cdot {\det} D^2\Phi(x+tv)\Bigr] \eta  \ d \mu.
\end{align*}
Denote by $\Psi=\Phi^*$ the convex conjugated function of $\Phi$.
Using the fact that $\nabla \Psi$ and $\nabla \Phi$ are reciprocal we get
\begin{align*}
  \int  \langle \nabla \Phi(x+tv)& - \nabla \Phi(x), \nabla W(\nabla \Phi(x))  \rangle  \eta  \ d\mu
\\&  = \int  \langle \nabla \Phi(x+tv)\circ (\nabla \Psi)- x, \nabla W(x)  \rangle  \eta(\nabla \Psi) \ d\nu
\\&
=
\int \mbox{\rm{div}} \bigl(  \nabla \Phi(x+tv)\circ (\nabla \Psi)- x\bigr)  \eta(\nabla \Psi) e^{-W} \ dx
\\& +
\int  \langle \nabla \Phi(x+tv)\circ (\nabla \Psi)- x, D^2 \Psi \cdot \nabla \eta(\nabla \Psi)  \rangle \ d\nu
.
\end{align*}
By the relation $ (D^2 \Phi(\nabla \Psi) )^{-1} = D^2\Psi$  we get
\begin{align*}
& \int \mbox{\rm{div}} \bigl(  \nabla \Phi(\nabla \Psi(x)+tv )- x\bigr)  \eta(\nabla \Psi) e^{-W} 
\\&
=
\int \bigl( \mbox{\rm Tr} D^2 \Phi (x+ tv) \circ(\nabla \Psi) \cdot (I + t Dv) \circ (\nabla \Psi)   \cdot  D^2 \Psi -  d \bigr) \eta(\nabla \Psi) \ d\nu
\\&
=\int \bigl( \mbox{\rm Tr} D^2 \Phi(x+ tv)   \cdot (I + t Dv)  \cdot  (D^2 \Phi(x))^{-1}-  d \bigr) \eta \ d \mu.
\end{align*}
Remark that
$$
\mathcal{T}(x,tv) = 
  \mbox{\rm Tr} D^2 \Phi(x+ tv)   \cdot  (D^2 \Phi(x))^{-1}-  d
-  \log \Bigl[ {\det} D^2 \Phi(x)  \cdot ({\det} D^2\Phi(x+tv))^{-1}\Bigr]
\ge 0.
$$
Thus one obtains
\begin{align*}
\int & (V(x+ tv) - V(x))  \eta \ d\mu 
\ge
\frac{K}{2} \int  |\nabla \Phi(x+tv)- \nabla \Phi(x)|^2 \eta \ d\mu
\\&
+ t \int \mbox{\rm Tr} \bigl(  D^2 \Phi(x+ tv)   \cdot  Dv(x)  \cdot  (D^2 \Phi(x))^{-1} \bigr) \eta \ d \mu
\\&
+
\int  \langle \nabla \Phi(x+tv) - \nabla \Phi(x), (D^2 \Psi) \nabla \eta(\nabla \Psi(x))  \rangle \ d\nu 
+ \int \mathcal{T}(x,tv) \eta \ d \mu.
\end{align*}
Now apply the same inequality to $-tv$, take the sum, and divide by $t^2$. 
It can be easily verified with the help of the Taylor formula that
$$
\lim_{t \to 0} \frac{\mathcal{T}(x,tv)  + \mathcal{T}(x,-tv)}{t^2} = \mbox{Tr} \Bigl[ (D^2 \Phi)^{-1} (D^2 \Phi)_v \bigr]^2 \ge 0.
$$
In the limit $t \to 0$ one gets the desired inequality.
\end{proof}

The proof of the Lemma \ref{zero-disc} follows some elementary measure-theoretical arguments and we omit it here.
It relies on the fact that the set of symmetric nonnegative matrices with multiple eigenvalue has smaller dimension in the ambient space of all symmetric nonnegative matrices.

\begin{lemma}
\label{zero-disc}
Assume that $\Phi$ is convex and twice continuously differentiable. For every $\varepsilon > 0$ there exists a matrix $Q_{\varepsilon} \ge 0$
such that   $\|Q_{\varepsilon}\| \le \varepsilon$ and $D^2 \Phi +  Q_{\varepsilon}$  has no multiple eigenvalues almost everywhere.
\end{lemma}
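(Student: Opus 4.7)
My plan is to reduce the statement to a Fubini argument on the space of symmetric matrices, using the fact that the locus of symmetric matrices with a repeated eigenvalue is the zero set of a nontrivial polynomial (the discriminant of the characteristic polynomial) in the matrix entries.

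First I would fix some notation. Let $\mathrm{Sym}(d)$ be the space of symmetric $d\times d$ matrices, equipped with Lebesgue measure on its $d(d+1)/2$-dimensional parameter space, and let $\Sigma\subset\mathrm{Sym}(d)$ denote the discriminant locus, i.e.\ the set of symmetric matrices with at least one repeated eigenvalue. Since the discriminant of the characteristic polynomial of a symmetric matrix $A$ is a polynomial $\mathrm{disc}(A)$ in the entries of $A$ that does not vanish identically (for instance it is nonzero on any diagonal matrix with distinct entries), $\Sigma = \{A:\mathrm{disc}(A)=0\}$ is a proper real algebraic subvariety of $\mathrm{Sym}(d)$, hence it has Lebesgue measure zero. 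In addition let $K_\varepsilon=\{Q\in\mathrm{Sym}(d): Q\ge 0,\ \|Q\|\le\varepsilon\}$; because $K_\varepsilon$ contains the set $\{Q:\tfrac{\varepsilon}{2}\mathrm{Id}\le Q\le\varepsilon\mathrm{Id}\}$, which is open and nonempty in $\mathrm{Sym}(d)$, it has strictly positive Lebesgue measure.

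Next I would apply Fubini's theorem on $\mathbb{R}^d\times K_\varepsilon$ to the bad set
\[
E=\{(x,Q)\in\mathbb{R}^d\times K_\varepsilon : D^2\Phi(x)+Q\in\Sigma\}.
\]
Since $D^2\Phi$ is continuous and $\Sigma$ is closed, $E$ is a closed (in particular Borel) subset of $\mathbb{R}^d\times K_\varepsilon$, so Fubini applies on each product $B_R\times K_\varepsilon$ with $B_R\subset\mathbb{R}^d$ a ball of radius $R$. For every fixed $x$, the horizontal slice $E_x=\{Q\in K_\varepsilon : Q\in\Sigma-D^2\Phi(x)\}$ is a translate of $\Sigma$ intersected with $K_\varepsilon$, hence has Lebesgue measure zero in $\mathrm{Sym}(d)$. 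Therefore $E\cap(B_R\times K_\varepsilon)$ has zero $(d+d(d+1)/2)$-dimensional measure, and integrating first over $x\in B_R$ shows that for Lebesgue-almost every $Q\in K_\varepsilon$ the vertical slice $E^Q\cap B_R=\{x\in B_R : D^2\Phi(x)+Q\in\Sigma\}$ has Lebesgue measure zero. Taking a countable exhaustion $B_R\uparrow\mathbb{R}^d$ and intersecting the resulting full-measure sets of admissible $Q$'s, I obtain that for a.e.\ $Q\in K_\varepsilon$ the set $\{x\in\mathbb{R}^d:D^2\Phi(x)+Q\in\Sigma\}$ has Lebesgue measure zero; any such $Q=Q_\varepsilon$ does the job.

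The only real subtlety — the step I expect to need a line of care — is verifying that the discriminant of the characteristic polynomial is genuinely a nontrivial polynomial in the matrix entries and that $K_\varepsilon$ has positive measure, both of which are standard but must be invoked explicitly so that the Fubini inversion yields a nonempty set of good $Q$'s. Everything else is routine measure theory, so I would keep the writeup short and essentially limit it to the two observations above.
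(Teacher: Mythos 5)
Your argument is correct and matches the route the paper indicates: the paper omits a full proof and only remarks that it ``relies on the fact that the set of symmetric nonnegative matrices with multiple eigenvalue has smaller dimension in the ambient space,'' which is precisely your observation that the discriminant locus $\Sigma$ is a proper real algebraic subvariety of $\mathrm{Sym}(d)$, combined with the standard Fubini slicing to produce a good $Q_\varepsilon$. The only tiny slip is that $\{Q:\tfrac{\varepsilon}{2}\mathrm{Id}\le Q\le\varepsilon\mathrm{Id}\}$ is not open as written (use strict inequalities, or just note that it has nonempty interior), but this does not affect the argument.
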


\begin{theorem}
\label{norm-result} Assume that $D^2 W \ge K \cdot \mbox{\rm Id}$ and $(D^2 V)_{+} \in L^1 (\mu)$.
Then the following inequality holds
$$
\int  \| (D^2 V)^+ \|  \  d \mu   \ge K \int \| D^2 \Phi\|^2   \  d \mu. 
$$
\end{theorem}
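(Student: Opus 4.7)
The plan is to apply Lemma \ref{eigenvec-ineq} pointwise with $v(x)$ a unit eigenvector of $D^2\Phi(x)$ associated to its largest eigenvalue $\lambda_1 = \lambda_{\max}$. Then $\|D^2\Phi \cdot v\|^2 = \lambda_1^2 = \|D^2\Phi\|^2$ at every point, while on the left $\langle D^2V \cdot v, v\rangle \le \langle (D^2V)^+ v,v\rangle \le \|(D^2V)^+\|$ since $|v|=1$. If the three ``extra'' terms on the right of the lemma sum to a non-negative quantity (after sending a suitable cutoff $\eta\to1$), the claimed estimate $\int\|(D^2V)^+\|\,d\mu \ge K\int\|D^2\Phi\|^2\,d\mu$ follows immediately.

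First I would reduce to a smooth setting via the approximation procedure of Steps 1--3 in the proof of Theorem \ref{mr}, so that $V,W$ are smooth and $\|D^2\Phi\|$ is bounded. To apply Lemma \ref{eigenvec-ineq} one needs a \emph{smooth} vector field, but the top eigenvector is only locally smooth where $\lambda_{\max}$ is a simple eigenvalue. Here Lemma \ref{zero-disc} enters: perturbing $D^2\Phi$ by a small $Q_\varepsilon\ge 0$ with $\|Q_\varepsilon\|\le\varepsilon$ yields a matrix field with simple spectrum almost everywhere, so the associated top unit eigenvector $v_\varepsilon$ is locally smooth off a measure-zero ``collision'' set. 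Localizing via the cutoff $\eta = \eta_n(\nabla\Phi)$, applying the lemma to $v_\varepsilon$, and then passing to the limits $\eta_n\to 1$ and $\varepsilon\to 0$ should extend the inequality globally. The first ``extra'' term vanishes in the limit by the choice of $\eta$, since $(D^2\Phi)^{-1}\nabla\eta = (\nabla\eta_n)(\nabla\Phi)\to 0$, and the third ``extra'' term is manifestly non-negative.

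The critical point is the sign of the remaining term $2\int \mbox{Tr}\bigl((D^2\Phi)_v\,Dv\,(D^2\Phi)^{-1}\bigr)\eta\,d\mu$. A direct computation in the orthonormal eigenbasis $\{e_1,\ldots,e_d\}$ of $D^2\Phi$ at a point with $v = e_1$ shows its positivity: differentiating $|v|^2 = 1$ gives $\partial_i v_1 = 0$, while differentiating $D^2\Phi\cdot v = \lambda v$ in direction $e_i$ and reading off the $j$-th component for $j\ne 1$ yields $\partial_i v_j = \Phi_{j1i}/(\lambda_1 - \lambda_j)$. Substituting into the trace gives
\begin{equation*}
\mbox{Tr}\bigl((D^2\Phi)_v\,Dv\,(D^2\Phi)^{-1}\bigr) = \sum_{i,\,j \ne 1} \frac{\Phi_{1ij}^2}{\lambda_i(\lambda_1-\lambda_j)} \ge 0,
\end{equation*}
because $\lambda_1 > \lambda_j$ for $j>1$ by the (perturbed) simplicity of the top eigenvalue, and $\lambda_i > 0$. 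Hence all ``extra'' contributions are non-negative in the limit, and the target inequality follows. The main technical obstacle is therefore not the algebraic identity above but rather the careful treatment of the perturbation $Q_\varepsilon$ and the collision set, and the verification that $D^2\Phi + Q_\varepsilon \to D^2\Phi$ in a sense strong enough to preserve both sides of the inequality as $\varepsilon \to 0$.
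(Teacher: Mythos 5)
Your local computation inside a region where the top eigenvalue of $D^2\Phi$ is simple is correct: the eigenbasis formula $\partial_i v_j = \Phi_{j1i}/(\lambda_1-\lambda_j)$ (for $j\neq 1$) together with $\partial_i v_1 = 0$ gives
$$\mbox{Tr}\bigl((D^2\Phi)_v\,Dv\,(D^2\Phi)^{-1}\bigr)=\sum_{i}\sum_{j\neq 1}\frac{\Phi_{1ij}^2}{\lambda_i(\lambda_1-\lambda_j)}\ge 0,$$
which is a coordinate version of the paper's identity $\mbox{Tr}\bigl((D^2\Phi)^{-1}(Dv)^T(\lambda I - D^2\Phi)\,Dv\bigr)\ge 0$, and both legitimately rely on $\lambda_1$ being the largest eigenvalue. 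This part is fine and essentially the same as Step 1 of the paper's proof.

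The genuine gap is your treatment of the set where eigenvalues collide. After perturbing by $Q_\varepsilon$ the collision set $S$ has measure zero, but the top eigenvector field $v$ is still discontinuous across $S$, and $Dv$ blows up there (your own formula has $\lambda_1-\lambda_j$ in the denominator). To apply Lemma \ref{eigenvec-ineq} you must keep the cutoff $\eta_n$ supported inside a single connected component $D_i$ of $\R^d\setminus S$, so ``$\eta_n\to 1$'' can only mean $\eta_n\to \1_{D_i}$ on that component. The assertion that the term $\int\langle (D^2\Phi)_v\cdot v,(D^2\Phi)^{-1}\nabla\eta_n\rangle\,d\mu$ vanishes is unjustified: as $\eta_n\to\1_{D_i}$ the gradient $\nabla\eta_n$ concentrates on $\partial D_i$, and this term converges to a genuine boundary integral, not to zero. (The device $\eta=\eta_n(\nabla\Phi)$ with $\nabla\eta_n\to 0$ in $L^2(\nu)$ that works in the proof of Theorem \ref{mr} is a \emph{global} approximation of $1$; it is incompatible with the requirement that $\eta_n$ vanish near $\partial D_i$.) What you still have to show is that, after summing over all the components $D_i$, these boundary fluxes add up to something non-negative. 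This is precisely the content of Step~2 of the paper's proof: one observes that $x\mapsto\Lambda(D^2\Phi(x))$ is a convex (hence one-sided differentiable) function of $x$ by Ky Fan's theorem, that on a regular boundary point between $D_i$ and $D_j$ the inward normal to $D_i$ is $n_{ij}=(\nabla_{D_i}\lambda-\nabla_{D_j}\lambda)/\|\nabla_{D_i}\lambda-\nabla_{D_j}\lambda\|$, and hence the paired boundary contribution $\langle\nabla_{D_i}\lambda-\nabla_{D_j}\lambda,(D^2\Phi)^{-1}n_{ij}\rangle$ is non-negative. Without this argument (or a substitute for it), the estimate does not follow. A secondary issue is that for a general smooth $\Phi$ the collision set $S$ need not have the regularity (codimension-one rectifiable boundary between finitely many components) that the boundary-flux argument requires; the paper handles this by first replacing $\Phi$ by a convex polynomial, so that $S$ becomes an algebraic variety, and only then applying $Q_\varepsilon$ and the flux computation, before approximating back.
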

\begin{proof}

{\bf Step 1.}
Let $\Phi$ be smooth.
Fix a point $x_0$. Assume that $D^2 \Phi(x_0)$ has no multiple eigenvalues.
Assume that $v$ is a smooth field coinciding with the unit eigenvectors of $D^2 \Phi$ corresponding to the unique largest eigenvalue $\lambda$ 
in a neighborhood $U_{x_0}$ of $x_0$.
Let us show that   $\mbox{\rm Tr} \bigl(  \partial_v D^2 \Phi   \cdot  Dv  \cdot  (D^2 \Phi)^{-1} \bigr)  \ge 0$ in $U_{x_0}$.

Indeed, one has 
$$
D^2 \Phi \cdot v= \lambda \cdot v, \ \  |v|=1.
$$
Differentiating both identities we get
$$
(D v )^{T} v =0,
$$
$$
\partial_v D^2 \Phi + D^2 \Phi \cdot Dv = \lambda \cdot Dv + v \oplus \nabla \lambda.
$$
Multiply (from the left) the second identity by $(D^2 \Phi)^{-1} \cdot (Dv)^{T}$ and take the trace.
Taking into account that
$$
\mbox{Tr} (D^2 \Phi)^{-1} (Dv)^{T} \cdot v \oplus \nabla \lambda  = \langle (D^2 \Phi)^{-1} (Dv)^{T}  v, \nabla \lambda \rangle =0
$$
one obtains
$$
\mbox{Tr} (D^2 \Phi)^{-1} (Dv)^{T} \cdot \partial_v D^2 \Phi + \mbox{Tr} (D^2 \Phi)^{-1} (Dv)^{T}  D^2 \Phi \cdot Dv
=
\lambda \cdot \mbox{Tr}  (D^2 \Phi)^{-1} (Dv)^{T} \cdot Dv.
$$
Finally we get
\begin{align*}
 \mbox{\rm Tr} \bigl(  \partial_v D^2 \Phi   \cdot  Dv  \cdot  (D^2 \Phi)^{-1} \bigr) &  =
\mbox{Tr} (D^2 \Phi)^{-1} (Dv)^{T} \cdot \partial_v D^2 \Phi 
\\&
=  \mbox{Tr}  (D^2 \Phi)^{-1} (Dv)^{T} (\lambda I - D^2 \Phi)\cdot Dv
\\&
= \mbox{Tr}  (D^2 \Phi)^{-1/2} (Dv)^{T} (\lambda I - D^2 \Phi) \cdot Dv \cdot (D^2 \Phi)^{-1/2}.
\end{align*}
Note that the latter is equal to
$$
\mbox{Tr} (A B A^{T}), \mbox{where} \  \  A = (D^2 \Phi)^{-1/2} (Dv)^{T} (D^2 \Phi)^{-1/2} , \   \   B = \lambda D^2 \Phi - (D^2 \Phi)^2.
$$
Since $\lambda$ is the largest eigenvalue,  $B$ is symmetric and non-negative. This immediately implies that  $$\mbox{\rm Tr} \bigl(  \partial_v D^2 \Phi   \cdot  Dv  \cdot  (D^2 \Phi)^{-1} \bigr) \ge 0$$

In particular, if $\mbox{supp}(\eta) \subset   U_{x_0} $, we obtain from the previous lemma
\begin{align}
\label{local-eig-est}
\int \| (D^2 V)^+ \|^2 \eta \  d \mu & 
\ge K \int \lambda^2  \cdot \eta  \  d \mu
+  \int \langle (D^2 \Phi)_v \cdot v , (D^2 \Phi)^{-1}\nabla \eta  \rangle  \ d\mu 
\nonumber
\\& + \int \mbox{\rm Tr} \Bigl[ (D^2 \Phi)^{-1} (D^2 \Phi)_v \bigr]^2 \eta \ d\mu.
\end{align}

{\bf Step 2.} 
Let us assume  that $\Phi$ is a convex polynom such that $D^2 \Phi$
has no multiple eigenvalues almost everywhere.
Recall that the set $S:=S(D^2 \Phi)$, where $D^2 \Phi$ has multiple eigenvalues, is the zero set of the discriminant of 
$D^2 \Phi$. Hence $S$ is an algebraic variety. In particular, for $\mathcal{H}^{d-1}$-almost every point $x \in \partial S$ the set
$S \cap B_r(x)$ is diffeomorphic to $\R^{d-1}$ for sufficiently small $r$ (see \cite{BCR}, Proposition 3.3.14).  Let $\R^d \setminus S = \cup D_i$, where every $D_i$ is a connected component of $\R^d \setminus S$. Clearly, one can choose a vector  field of unit eigenvectors $v$ corresponding to the 
largest eigenvalue of $D^2 \Phi$ such that $v|_{D_i}$ is smooth for every $D_i$.

By a classical result of Ky Fan \cite{KyFan} the function $A \to \Lambda(A)$, where 
$\Lambda(A)$ is the largest eigenvalue is convex on the set of symmetric matrices.
This implies, in particular, that the function
$$
\lambda(x) : x \to \Lambda(D^2 \Phi(x))
$$
has a directional derivative
$$
\partial_e \lambda(x) = \lim_{t \to 0} \frac{\lambda(x+te) - \lambda(x)}{t}
$$
for every $x$ and every direction $e$.
For every regular point $x \in \partial D_i$ we define
$$
\nabla_{D_i} \lambda = \sum_{i=1}^d \partial_{e_i} \lambda \cdot e_i,
$$
where the basis $\{e_i\}$ is chosen in such a way that $x + te_i \in D_i$ for the small values of $t$ and every $i$.

Note that
$$\partial_e \langle D^2 \Phi \cdot v, v \rangle = \langle  D^2 \Phi_e \cdot v, v \rangle + 2 \langle D^2 \Phi \  \partial_e v, v \rangle$$ 
inside of $D_i$,
Since $v$ is an unit eigenvalue of $D^2 \Phi$, $\partial_e v$ is orthogonal to $D^2 \Phi v$ and one has
$$
\partial_e \lambda(x) = \langle  D^2 \Phi_e \cdot v, v \rangle.
$$

Let us fix a compact domain $B$ with smooth boundary and  apply (\ref{local-eig-est}) to $\nu = I_{D_i \cap B}$. More precisely, we choose a sequence of smooth test functions
$\{\eta_n\}$ with supports inside of $D_i \cap B_R$ such that $\eta_i \to I_{D_i \cap B} $.
One gets in the limit
\begin{align}
\label{loc-eig-est}
\int_{D_i \cap B} & \| (D^2 V)^+ \|^2 \  d \mu  
\ge K \int_{D_i \cap B} \lambda^2   \  d \mu 
+
\int_{D_i \cap B} \mbox{\rm Tr} \Bigl[ (D^2 \Phi)^{-1} (D^2 \Phi)_{v} \Bigr]^2 \ d\mu
\\&
\nonumber
+   \int_{\partial D_i \cap B } \big\langle  \nabla_{D_i} \lambda, (D^2 \Phi)^{-1} n_{D_i} \big\rangle  \  d\mu +  
\int_{D_i \cap \partial B} \big\langle \nabla_{D_i} \lambda, (D^2 \Phi)^{-1} n_B \big\rangle  \ d\mu,
\end{align}
where $n_B$ is the inward  normal to $\partial B$.

Now take a regular point $x \in \partial D_i$. Clearly, $x$ belongs to the border between two sets $D_i$ and $D_j$, $j \ne i$
and the inward normal of $\partial D_i$
can be computed in the following way
$$
n_{ij} = \frac{\nabla_{D_i} \lambda - \nabla_{D_j} \lambda}{\| \nabla_{D_i} \lambda - \nabla_{D_j} \lambda \|}.
$$
Taking the sum of (\ref{loc-eig-est}) over $i$ we get
that the integral term over the boundary $\cup_{i} \partial D_i \cap B$ takes the form
$$
\sum_{i,j}  \int_{\partial D_i \cap \partial D_j \cap B } \Big\langle \nabla_{D_i} \lambda -  \nabla_{D_j}  \lambda,   (D^2 \Phi )^{-1} n_{ij}  \Big\rangle  \  d\mu 
$$
and it is obviously non-negative.

Taking the sum over $i$ we get
\begin{align*}
\int_{ B} & \| (D^2 V)^+ \|^2 \  d \mu  
\ge K \int_{ B} \lambda^2   \  d \mu 
+
\int_{B} \mbox{\rm Tr} \Bigl[ (D^2 \Phi)^{-1} (D^2 \Phi)_{v} \Bigr]^2 \ d\mu
\\&
+ \sum_i \int_{D_i \cap \partial B} \big\langle \nabla_{D_i} \lambda, (D^2 \Phi)^{-1} n_B \big\rangle  \ d\mu.
\end{align*}

Fix a smooth 
compactly supported nonnegative test function $\xi$.
Applying the coarea formula and the above estimate applied to the level sets  of $\xi$ one can easily get that 
\begin{align*}
\int & \| (D^2 V)^+ \|^2 \xi \  d \mu  
\ge K \int \lambda^2 \xi   \  d \mu 
+
\int \mbox{\rm Tr} \Bigl[ (D^2 \Phi)^{-1} (D^2 \Phi)_{v} \Bigr]^2 \xi \ d\mu
\\&
+  \sum_i  \int_{D_i} \langle (D^2 \Phi)_{v} \cdot v , (D^2 \Phi)^{-1} \nabla \xi \rangle  \ d\mu .
\end{align*}
Applying the standard relations between the operator and Hilbert-Schmidt norms
\begin{align*}
\mbox{\rm Tr} \Bigl[ (D^2 \Phi)^{-1} (D^2 \Phi)_{v} \Bigr]^2 &
=
\| (D^2 \Phi)^{-1/2} (D^2 \Phi)_{v}  (D^2 \Phi)^{-1/2}\|_{HS}^2  
\\&
\ge \frac{ \|(D^2 \Phi)_{v}\|^2_{HS}}{\|D^2 \Phi \|^2}
 \ge \frac{ \|(D^2 \Phi)_{v}\|^2}{\|D^2 \Phi \|^2}.
\end{align*}
and the Cauchy inequality one finally gets
\begin{equation}
\label{27.07}
\int  \| (D^2 V)^+ \|^2 \xi \  d \mu   + 4 \int \frac{|\nabla \xi|^2}{\xi} \ d \mu
\ge K \int \lambda^2 \xi   \  d \mu. 
\end{equation}
Choosing an appropriate sequence of compactly supported functions $\{\xi_n\}$
such that $\lim_n \xi_n =1$ and $\lim_{n} \int  \frac{|\nabla \xi_n|^2}{\xi_n} \ d\mu =0$ we get the claim.

{\bf Step 3}. Here we prove the general case. 
In the same way as in Theorem \ref{mr} one can approximate $V$ and $W$ by smooth functions with at most quadratic growth.
 Hence, one can assume without loss of generality that $\Phi$ is smooth.
To apply the previous step we fix a compact set $B$
and choose a sequence of polynomial functions $\{\Phi_n\}$ such that $\Phi_n \to \Phi$ on $B$  locally uniformly with all
the derivatives up to the fourth order (this can be done by a multidimensional version of
the Weierstrass approximation theorem).

Since we have convergence of the second derivatives, the functions $\Phi_n$ are convex for  sufficiently big $n$.
Applying Lemma \ref{zero-disc} we may assume that $S(\Phi_n)$ has zero measure.
Note that
the mapping $\nabla \Phi_n$ sends $e^{-V_n} \ dx$ onto $\mu$, where 
$$
V_n = W(\nabla \Phi_n) - \log \det D^2 {\Phi_n}.
$$
From the convergence $\Phi_n \to \Phi$ follows that $V_n \to V$ uniformly in $B$ and the same holds for the derivatives up to the  second order.
Passing to the limits one obtains (\ref{27.07}) for $V$ and any smooth test function $\xi$ . Choosing an appropriate sequence $\{\xi_n\}$ with ${\xi_n} \to 1$ 
one can easily complete the proof.
\end{proof}

The  following result  generalizes  Theorem \ref{norm-result} in the same
manner as Theorem \ref{lp-est} generalizes Theorem \ref{mr}.
The proof can be obtained by modifying the  proof of  Theorem \ref{norm-result} and we omit it here.

\begin{theorem}
 Assume that $D^2 W \ge K \cdot \mbox{\rm Id}$. Then for every $r \ge 1$ one has
$$
K 
\Bigl( \int  \| D^2 \Phi\|^{2r} \ d \mu \Bigr)^{\frac{1}{r}}
\le \Bigl(  \int  \| (D^2 V)_{+} \|^r  \ d \mu \Bigr)^{\frac{1}{r}}.
$$
\end{theorem}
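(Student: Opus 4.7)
The plan is to modify the proof of Theorem \ref{norm-result} by inserting the weight $\lambda^{2(r-1)}$, where $\lambda(x)=\|D^2\Phi(x)\|$ is the largest eigenvalue of the Hessian, into the test function. This is structurally parallel to how Theorem \ref{lp-est} generalizes Theorem \ref{mr} through the factor $(\delta_{te}\Phi)^p$. My target is the pointwise-weighted inequality
$$
\int\|(D^2V)^+\|\,\lambda^{2(r-1)}\,d\mu \;\ge\; K\int\lambda^{2r}\,d\mu,
$$
from which the theorem follows immediately by H\"older with exponents $r$ and $r/(r-1)$: the left-hand side is bounded by $\|(D^2V)^+\|_{L^r(\mu)}\bigl(\int\lambda^{2r}\,d\mu\bigr)^{(r-1)/r}$, and dividing gives the stated $L^r$-inequality.

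To establish the displayed inequality, I would reduce by polynomial approximation (as in Step 3 of Theorem \ref{norm-result}, using Lemma \ref{zero-disc}) to the case where $\Phi$ is smooth and $D^2\Phi$ has simple spectrum off a null algebraic set $S$, and choose on each connected component $D_i$ of $\R^d\setminus S$ a smooth unit eigenvector field $v$ for $\lambda$. On each $D_i$ apply Lemma \ref{eigenvec-ineq} with $\eta$ a smooth approximation of $\lambda^{2(r-1)}\xi$ for a nonnegative compactly supported bump $\xi$. Bounding the left-hand side by $\int\|(D^2V)^+\|\lambda^{2(r-1)}\xi\,d\mu$ (since $|v|=1$), using $\|D^2\Phi\cdot v\|^2=\lambda^2$, and invoking the identity $(D^2\Phi)_v\cdot v=\nabla\lambda$ derived in Step 1 of Theorem \ref{norm-result}, the cross term against $\nabla\eta=2(r-1)\lambda^{2r-3}\xi\,\nabla\lambda+\lambda^{2(r-1)}\nabla\xi$ splits into a manifestly non-negative bulk piece $2(r-1)\int\lambda^{2r-3}\langle\nabla\lambda,(D^2\Phi)^{-1}\nabla\lambda\rangle\xi\,d\mu$ and a $\nabla\xi$-piece that, for $r>1$, is absorbed via Young's inequality into the former up to a controllable error of order $\int\lambda^{2r-1}\langle\nabla\xi,(D^2\Phi)^{-1}\nabla\xi\rangle/\xi\,d\mu$; the non-negative Hilbert--Schmidt trace term in Lemma \ref{eigenvec-ineq} can simply be discarded, and the case $r=1$ coincides with Theorem \ref{norm-result}.

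Summing over the components $D_i$ produces interface integrals between neighbors $D_i,D_j$; by Ky Fan's convexity of the top-eigenvalue map, $\lambda$ is continuous across $S$ and the jump $\nabla_{D_i}\lambda-\nabla_{D_j}\lambda$ is parallel to the interface normal $n_{ij}$, so the interface contributions carry a definite sign exactly as in Theorem \ref{norm-result}, and multiplication by the continuous scalar weight $\lambda^{2(r-1)}$ preserves this sign-definiteness. Choosing $\xi\to 1$ along a sequence with $\int|\nabla\xi|^2/\xi\,d\mu\to 0$ then yields the displayed weighted inequality. The main obstacle is precisely this interface bookkeeping in the presence of a non-smooth weight: although $\lambda^{2(r-1)}$ is continuous across $S$, its distributional gradient contains $\nabla\lambda$, whose normal component jumps, and one must verify that the interface terms remain sign-definite under the weighting. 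Since the sign ultimately comes from $\langle\nabla_{D_i}\lambda-\nabla_{D_j}\lambda,(D^2\Phi)^{-1}n_{ij}\rangle$ — which is unaffected by a continuous scalar multiplier — the argument goes through, subject to a standard truncation on $\{\lambda\le M\}$ followed by $M\to\infty$ to guarantee finiteness of the quantities entering H\"older's inequality.
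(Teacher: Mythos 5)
Your plan is exactly what the paper indicates: it states that this theorem generalizes Theorem \ref{norm-result} in the same manner as Theorem \ref{lp-est} generalizes Theorem \ref{mr}, and inserting the power weight $\lambda^{2(r-1)}$ (with $\lambda=\|D^2\Phi\|$) into Lemma \ref{eigenvec-ineq} via $\eta=\lambda^{2(r-1)}\xi$ and then applying H\"older with exponents $r$ and $r/(r-1)$ is the correct realization of that analogy, in complete parallel with the factor $(\delta_{te}\Phi)^p$, $p=2(r-1)$, used in Theorem \ref{lp-est}. The one genuinely new point you had to check --- that the interface terms from Step 2 of Theorem \ref{norm-result} keep their sign after multiplication by the continuous non-negative scalar $\lambda^{2(r-1)}\xi$, since the sign comes from $\langle\nabla_{D_i}\lambda-\nabla_{D_j}\lambda,(D^2\Phi)^{-1}n_{ij}\rangle$ and is unaffected by a scalar factor --- is correctly identified and handled, while the absorption of the $\nabla\xi$-term and the final truncation/limiting step are of the same order of delicacy as in the paper's own (omitted) argument for Theorem \ref{norm-result}.
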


This work was supported by the RFBR projects 07-01-00536 and 08-01-90431-Ukr, the  DAAD Grant (2010),
and the SFB701 at the University of Bielefeld.

\end{document}